\newtheorem{theorem}{Theorem}[section]
\newtheorem{corollary}[theorem]{Corollary}
\newtheorem{definition}[theorem]{Definition}
\newtheorem{lemma}[theorem]{Lemma}
\begin{document}

\title{Quasi-Einstein structures and almost cosymplectic manifolds}

\author{\\  Xiaomin Chen
\thanks{
The author is supported by Natural Science Foundation of Beijing, China (Grant No.1194025).
 }\\
{\normalsize College of Science, China University of Petroleum (Beijing),}\\
{\normalsize Beijing, 102249, China}\\
{\normalsize xmchen@cup.edu.cn}}
\maketitle \vspace{-0.1in}


\abstract{In this article, we study almost cosymplectic manifolds admitting quasi-Einstein structures $(g, V, m, \lambda)$.
First we prove that an almost cosymplectic $(\kappa,\mu)$-manifold is locally isomorphic
to a Lie group if $(g, V, m, \lambda)$ is closed and on a compact almost $(\kappa,\mu)$-cosymplectic manifold there do not exist
quasi-Einstein structures $(g, V, m, \lambda)$, in which the potential vector field $V$ is collinear with the Reeb vector filed $\xi$. Next we consider an  almost $\alpha$-cosymplectic manifold admitting a quasi-Einstein structure and obtain some results. Finally, for a $K$-cosymplectic manifold with a closed, non-steady quasi-Einstein structure, we prove that it is $\eta$-Einstein. If $(g, V, m, \lambda)$ is non-steady and $V$ is a conformal vector field, we obtain the same conclusion.}
 \vspace{-0.1in}
\medskip\vspace{12mm}

\noindent{\it Keywords}:  quasi-Einstein structures; almost cosymplectic $(\kappa,\mu)$-manifolds; almost $\alpha$-cosymplectic manifolds; cosymplectic manifolds; Einstein manifolds.
  \vspace{2mm}

\noindent{\it MSC}: 53C25; 53D15 \vspace{2mm}

\section{Introduction}

Einstein metrics is an important for both mathematics and physics. But the pure Einstein theory is always  too strong as a system model for various physical questions, thus its several generalizations have been
studied. A Ricci soliton is a Riemannian metric, which satisfies
\begin{equation*}
 \frac{1}{2}\mathcal{L}_V g+Ric-\lambda g=0,
\end{equation*}
where $V$ and $\lambda$ are the potential vector field and some constant, respectively. It is clear that a trivial Ricci soliton is an Einstein metric with $V$ zero or Killing. When the potential vector field $V$ is a gradient vector field, i.e. $V=Df$, where $f$ is a smooth function, then it is called a \emph{gradient Ricci soliton.}

 An interesting generalization of Einstein metrics was proposed by Case \cite{C}, which arises from the $m$-Bakry-Emery Ricci tensor. The Ricci tensor is defined as follows:
 \begin{equation*}
   \mathrm{Ric}^m_f=\mathrm{Ric}+\nabla^2f-\frac{1}{m}df\otimes df,
    \end{equation*}
   where the integer $m$ satisfies $0<m\leq\infty$, $\nabla^2f$ denotes the Hessian form of the smooth function $f$. We call a triple $(g, f, m,\lambda)$ (a Riemannian manifold $(M, g)$ with a function
$f$ on $M$) ($m$-)\emph{quasi-Einstein structure} if it satisfies the equation
\begin{equation}\label{1}
{\rm Ric}+\nabla^2f-\frac{1}{m}df\otimes df = \lambda g
\end{equation}
for some $\lambda\in\mathbb{R}$. Notice that Equation \eqref{1} recovers the gradient Ricci soliton when $m=\infty$. A quasi-Einstein metric is an Einstein metric if $f$ is constant.
For a general manifold,  quasi-Einstein metrics have been studied in depth and some rigid properties and gap results were obtained (cf.\cite{CSW,W,W2}).

Later on Barros-Ribeiro Jr \cite{BR} and Limoncu \cite{L} generalized and studied the previous equation \eqref{1},
independently, by considering a 1-form $V^\flat$ instead of $df$ , which is satisfied
\begin{equation}\label{4.27}
\mathrm{Ric}+\frac{1}{2}\mathcal{L}_V g-\frac{1}{m}V^\flat\otimes V^\flat = \lambda g,
\end{equation}
where $V^\flat$ is the 1-form associated to $V$. In particular, if the 1-form $V^\flat$ is closed, we call quasi-Einstein structure $(g, V, m, \lambda)$ is \emph{closed}. Using the terminology of Ricci solitons, we call a quasi-Einstein
structure \emph{shrinking, steady or expanding}, respectively, if $\lambda<0, \lambda=0$, or $\lambda>0.$
When $V\equiv0$, a quasi-Einstein structure is said to be \emph{trivial} and in this case, the metric becomes an Einstein metric.
It is mentioned that a quasi-Einstein structure $(g, V,m, \lambda)$ is reduced to a Ricci soliton when $m=\infty.$

Recently, Barros-Gomes in \cite{BG3} further studied Eq.\eqref{4.27} and they proved that if a compact quasi-Einstein $(M^n ,g,V,m,\lambda)$, $n\geq3$ is Einstein, then $V$ vanishes identically.  On the other hand, we also notice that for the odd-dimensional manifold, Ghosh in \cite{GH2} studied  contact metric manifolds with quasi-Einstein structure $(g, f,m, \lambda)$.
More recently, he considered quasi-Einstein structure $(g, V, m, \lambda)$ in the framework of contact metric manifolds (see \cite{GH1}).

Remark that another class of almost contact manifold, called \emph{almost cosymplectic manifold}, was also paid many attentions (see a survey \cite{MNY}). The concept was first defined by Goldberg and Yano \cite{GY} as an almost contact manifold whose 1-form $\eta$ and fundamental 2-form $\omega$ are closed.  An almost cosymplectic manifold is said to be \emph{cosymplectic} if in addition the almost contact structure is normal (notice that here we adopt  "cosymplectic" to represent "coK\"ahler" in \cite{MNY}).
Recently, Bazzoni-Goertsches \cite{BG} defined a \emph{$K$-cosymplectic manifold}, namely an almost cosymplectic manifold whose Reeb vector field is Killing. In \cite{CP}, in fact it is proved that every compact Einstein $K$-cosymplectic manifold is necessarily cosymplectic.
In addition, Endo \cite{E} defined the notion of \emph{almost cosymplectic $(\kappa,\mu)$-manifold}, i.e. the
curvature tensor of an almost cosymplectic manifold satisfies
\begin{equation}\label{4.1}
  R(X,Y)\xi=\kappa(\eta(Y)X-\eta(X)Y)+\mu(\eta(Y)hX-\eta(X)hY)
\end{equation}
for any vector fields $X,Y$, where $\kappa,\mu$ are constant and $h=\frac{1}{2}\mathcal{L}_\xi\phi$.  As the extension of almost cosymplectic manifold,  Kenmotsu \cite{K} defined the {\it almost Kenmotsu manifold}, which is an almost contact manifold satisfying $d\eta=0$ and $d\omega=2\eta\wedge\omega$.
    Based on this Kim and Pak \cite{KP} introduced the concept of \emph{almost $\alpha$-cosymplectic manifold}, i.e. an almost contact manifold satisfying
$d\eta=0$ and $d\omega=2\alpha\eta\wedge\omega$ for some real number $\alpha$.

Motivated by the above background, in the present paper we mainly consider three classes of almost cosymplectic manifolds with quasi-Einstein structures $(g, V, m, \lambda)$ including almost $(\kappa,\mu)$-manifolds, almost $\alpha$-cosymplectic manifolds and $K$-cosymplectic manifolds, i.e.  almost cosymplectic manifolds with Killing Reeb vector field $\xi$.  In order to prove our results, we need to recall some definitions and related conclusions on almost cosymplectic manifolds as well as quasi-Einstein structures, which are presented in Section 2. Starting from Section 3, we will state our results and give their proofs.

\section{Preliminaries}
 Let $M^{2n+1}$ be a $(2n+1)$-dimensional smooth manifold.
An \emph{almost contact structure} on $M$ is a triple $(\phi,\xi,\eta)$, where $\phi$ is a
$(1,1)$-tensor field, $\xi$ a unit vector field, called Reeb vector field, $\eta$ a one-form dual to $\xi$ satisfying
$\phi^2=-I+\eta\otimes\xi,\,\eta\circ \phi=0,\,\phi\circ\xi=0.$
A smooth manifold with such a structure is called an \emph{almost contact manifold}.

A Riemannian metric $g$ on $M$ is called compatible with the almost contact structure if
\begin{equation*}
g(\phi X,\phi Y)=g(X,Y)-\eta(X)\eta(Y),\quad g(X,\xi)=\eta(X)
\end{equation*}
for any $X,Y\in\mathfrak{X}(M)$. An almost contact structure together with a compatible metric
is called an \emph{almost contact metric structure} and $(M,\phi,\xi,\eta,g)$ is called an almost contact metric manifold. An almost contact structure $(\phi,\xi,\eta)$ is said
to be \emph{normal} if the corresponding complex structure $J$ on $M\times\mathbb{R}$ is integrable.

Denote by $\omega$ the fundamental 2-form on $M$ defined by $\omega(X,Y):=g(\phi X,Y)$ for all $X,Y\in\mathfrak{X}(M)$.
An {\it almost $\alpha$-cosymplectic manifold} (\cite{KP,OAM}) is an almost contact metric manifold $(M,\phi,\xi,\eta,g)$ such that the fundamental form $\omega$ and 1-form $\eta$ satisfy $d\eta=0$ and $d\omega=2\alpha\eta\wedge\omega,$ where $\alpha$ is a real number. A normal almost $\alpha$-cosymplectic manifold is called an \emph{$\alpha$-cosymplectic manifold}.  $M$ is an {\it almost cosymplectic manifold} if $\alpha=0$.

Let $M$ be an almost $\alpha$-cosymplectic manifold, we recall that there is an operator
$h=\frac{1}{2}\mathcal{L}_\xi\phi$ which is a self-dual operator. In particular, if $h=0$, $M$ is normal. The Levi-Civita connection
is given by (see \cite{OAM})
\begin{equation}\label{2.4*}
  2g((\nabla_X\phi)Y,Z)=2\alpha g(g(\phi X,Y)\xi-\eta(Y)\phi X,Z)+g(N(Y,Z),\phi X)
\end{equation}
for arbitrary vector fields $X,Y$, where $N$ is the Nijenhuis torsion of $M$.  Then by a simple calculation, we have
\begin{equation}\label{2.2*}
\mathrm{trace}(h)=0,\quad h\xi=0,\quad\phi h=-h\phi,\quad g(hX,Y)=g(X,hY),\quad\forall X,Y\in\mathfrak{X}(M).
\end{equation}

 Using \eqref{2.4*}, a straightforward calculation gives
\begin{equation}\label{2.5}
\nabla_X\xi= -\alpha\phi^2X-\phi hX
\end{equation}
and $\nabla_\xi\phi=0$. Denote by $R$ and $\mathrm{Ric}$ the Riemannian curvature tensor and Ricci tensor, respectively. For an almost $\alpha$-cosymplectic manifold $(M^{2n+1},\phi,\xi,\eta,g)$ the following equations were proved(\cite{OAM}):
\begin{align}
&R(X,\xi)\xi-\phi R(\phi X,\xi)\xi=2[\alpha^2\phi^2X-h^2X]\label{2.6},\\
&\mathrm{trace}(\phi h)=0,\label{2.9}\\
 &R(X,\xi)\xi=\alpha^2\phi^2X+2\alpha\phi hX-h^2X+\phi(\nabla_\xi h)X\label{2.10}
 \end{align}
for any vector fields $X,Y$ on $M$.

Next we recall two important lemmas for a Riemannian manifold satisfying quasi-Einstein equation \eqref{1}.
\begin{lemma}[\cite{GH2}]\label{L3.2}
For a quasi-Einstein $(M,g, f,m, \lambda)$, the curvature tensor $R$ can be expressed as
\begin{align*}
R(X,Y )D f =&(\nabla_YQ)X-(\nabla_XQ)Y-\frac{\lambda}{m}\{X(f)Y-Y(f)X\}\\
&+\frac{1}{m}\{X(f)QY-Y(f)QX\}
\end{align*}
for any vector fields $X,Y$ on $M$, where $Q$ is the Ricci operator of $M$.
\end{lemma}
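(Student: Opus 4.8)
The plan is to prove this identity by a direct curvature computation driven entirely by the quasi-Einstein equation written in operator form. First I would recast Equation \eqref{1} using the Ricci operator $Q$, defined by $\mathrm{Ric}(X,Y)=g(QX,Y)$, together with the observation that the Hessian of $f$, viewed as a $(1,1)$-tensor, is the map $X\mapsto\nabla_X Df$, and that $df\otimes df$ corresponds to $X\mapsto X(f)\,Df$. With these identifications the equation becomes
\[
\nabla_X Df=\lambda X-QX+\frac{1}{m}X(f)\,Df .
\]
This single formula for the covariant derivative of the gradient $Df$ is the engine of the whole calculation, and I would substitute it repeatedly below.

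Next I would insert this expression into the definition of the curvature tensor,
\[
R(X,Y)Df=\nabla_X\nabla_Y Df-\nabla_Y\nabla_X Df-\nabla_{[X,Y]}Df ,
\]
expanding each second covariant derivative by the Leibniz rule and writing $\nabla_X(QY)=(\nabla_X Q)Y+Q(\nabla_X Y)$. The differentiation produces four natural groups of terms: those proportional to $\lambda$ with a Levi-Civita derivative, those of the form $Q(\nabla_\bullet\,\bullet)$, the derivatives $(\nabla_\bullet Q)\bullet$ of the Ricci operator, and the $\tfrac{1}{m}$-terms arising from differentiating the gradient factor $X(f)\,Df$.

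The remainder of the argument is organized cancellation using that the connection is torsion-free, $\nabla_X Y-\nabla_Y X=[X,Y]$. The purely metric contributions $\lambda(\nabla_X Y-\nabla_Y X-[X,Y])$ vanish, and likewise $-Q(\nabla_X Y)+Q(\nabla_Y X)+Q[X,Y]$ collapses to zero. The second-derivative terms $\tfrac{1}{m}\{X(Y(f))-Y(X(f))-[X,Y](f)\}Df$ also cancel, since they reassemble the identity $[X,Y](f)=X(Y(f))-Y(X(f))$, and the two $\tfrac{1}{m^2}$-terms, both proportional to $X(f)Y(f)\,Df$, cancel in pairs. What survives among the first-order $\tfrac{1}{m}$-terms is exactly $\tfrac{\lambda}{m}\{Y(f)X-X(f)Y\}+\tfrac{1}{m}\{X(f)QY-Y(f)QX\}$; rewriting the first bracket as $-\tfrac{\lambda}{m}\{X(f)Y-Y(f)X\}$ and collecting the leftover derivative-of-$Q$ terms as $(\nabla_Y Q)X-(\nabla_X Q)Y$ yields precisely the asserted formula.

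I expect the only real obstacle to be bookkeeping rather than any conceptual difficulty: each of the three iterated covariant derivatives generates five or six terms, and one must track them carefully enough that the torsion-free cancellations and the symmetric cancellation of the $\tfrac{1}{m^2}$-terms are transparent. Writing the Hessian formula once and substituting it consistently, while grouping terms by their tensorial type before simplifying, keeps the manipulation disciplined and makes the final collection straightforward.
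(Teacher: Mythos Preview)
Your argument is correct: rewriting the quasi-Einstein equation as $\nabla_X Df=\lambda X-QX+\tfrac{1}{m}X(f)\,Df$ and feeding it into the curvature identity $R(X,Y)Df=\nabla_X\nabla_Y Df-\nabla_Y\nabla_X Df-\nabla_{[X,Y]}Df$ produces exactly the stated formula after the torsion-free cancellations and the pairwise cancellation of the $\tfrac{1}{m^2}$ terms, as you describe. Note, however, that the paper does not supply its own proof of this lemma; it is quoted from \cite{GH2}, so there is no in-paper argument to compare against. Your computation is the standard one and would serve as a complete proof.
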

\begin{lemma}[\cite{CSW}]
For a quasi-Einstein $(M^{2n+1}, g, f, m,\lambda)$, the following equations hold:
\begin{align}
\frac{1}{2}Dr =& \frac{m-1}{m}Q(D f)+\frac{1}{m}\Big(r-2n\lambda\Big)Df\label{2.13},\\
  \frac{1}{2}\Delta r-\frac{m+2}{2m}g(Df,Dr)=& -\frac{m-1}{m}\Big|{\rm Ric}-\frac{r}{2n-1}g\Big|^2\label{3.10*}\\
  &-\frac{m+2n}{m(2n+1)}\Big(r-(2n+1)\lambda\Big)\Big(r-\frac{2n(2n+1)}{m+2n}\lambda\Big).\nonumber
\end{align}
Here $r$ denotes the scalar curvature of $M$.
\end{lemma}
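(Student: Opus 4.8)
The plan is to obtain both identities purely from the operator form of the quasi-Einstein equation \eqref{1}, the contracted second Bianchi identity, and the curvature formula of Lemma~\ref{L3.2}. First I would rewrite \eqref{1} as the operator equation $QX=\lambda X-\nabla_X Df+\frac{1}{m}X(f)Df$, where $Q$ is the Ricci operator, and record its metric trace over an orthonormal frame $\{e_i\}$, namely $r+\Delta f-\frac{1}{m}|Df|^2=(2n+1)\lambda$. This scalar identity will be used at the very end to eliminate $\Delta f$.

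To prove \eqref{2.13}, I would contract the curvature formula in Lemma~\ref{L3.2} in the variable $X$ against the frame. The left-hand side contracts to $\mathrm{Ric}(Df,Y)=g(Q(Df),Y)$. On the right-hand side, $\sum_i g((\nabla_Y Q)e_i,e_i)=Y(r)$ while $\sum_i g((\nabla_{e_i}Q)Y,e_i)=(\mathrm{div}\,Q)(Y)=\frac{1}{2}Y(r)$ by the contracted second Bianchi identity; the two purely algebraic terms contract to multiples of $Y(f)$, $g(Q(Df),Y)$, and $r\,Y(f)$. Collecting these and using that $Q$ is self-adjoint yields $\frac{m-1}{m}g(Q(Df),Y)=\frac{1}{2}Y(r)-\frac{1}{m}(r-2n\lambda)Y(f)$ for every $Y$, which is precisely \eqref{2.13} after raising the index.

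For \eqref{3.10*} I would take the divergence of \eqref{2.13}. Here $\mathrm{div}(Dr)=\Delta r$, and the key computation is $\mathrm{div}(Q(Df))=(\mathrm{div}\,Q)(Df)+\langle\nabla^2 f,\mathrm{Ric}\rangle=\frac{1}{2}g(Dr,Df)+\langle\nabla^2 f,\mathrm{Ric}\rangle$, together with $\mathrm{div}\big((r-2n\lambda)Df\big)=g(Dr,Df)+(r-2n\lambda)\Delta f$. Substituting $\nabla^2 f=\lambda g-\mathrm{Ric}+\frac{1}{m}df\otimes df$ gives $\langle\nabla^2 f,\mathrm{Ric}\rangle=\lambda r-|\mathrm{Ric}|^2+\frac{1}{m}\mathrm{Ric}(Df,Df)$, and I would then eliminate $\mathrm{Ric}(Df,Df)$ using \eqref{2.13} and eliminate $\Delta f$ using the trace identity above. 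At this point the terms in $|Df|^2$ cancel identically, the terms in $g(Dr,Df)$ assemble exactly into the coefficient $\frac{m+2}{2m}$ appearing on the left of \eqref{3.10*}, and the curvature term is put into the stated shape by completing the square, $|\mathrm{Ric}|^2=|\mathrm{Ric}-\tfrac{r}{2n+1}g|^2+\tfrac{r^2}{2n+1}$ in dimension $2n+1$, so that the Ricci contribution becomes a multiple of the squared traceless Ricci tensor.

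The main obstacle is the final scalar bookkeeping in this last step: one must check that the $|Df|^2$ contributions really cancel, that the $g(Dr,Df)$ contributions combine into $\frac{m+2}{2m}g(Df,Dr)$, and, above all, that the residual second-degree polynomial in $r$ and $\lambda$ factors exactly as $\big(r-(2n+1)\lambda\big)\big(r-\frac{2n(2n+1)}{m+2n}\lambda\big)$ with prefactor $\frac{m+2n}{m(2n+1)}$. Matching the coefficients of $r^2$, $\lambda r$, and $\lambda^2$ is what pins down the dimensional normalization inside the traceless-Ricci term, and this is the most delicate point of the argument.
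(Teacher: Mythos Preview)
The paper does not supply its own proof of this lemma; it is quoted verbatim from \cite{CSW} and used as a black box. So there is no argument in the paper to compare against.

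Your outline is the standard derivation and is essentially correct. The contraction of Lemma~\ref{L3.2} over an orthonormal frame, combined with the contracted second Bianchi identity $(\mathrm{div}\,Q)(Y)=\tfrac12 Y(r)$, yields \eqref{2.13} exactly as you describe. For \eqref{3.10*}, taking the divergence of \eqref{2.13}, using $\mathrm{div}(Q(Df))=\tfrac12 g(Dr,Df)+\langle\nabla^2 f,\mathrm{Ric}\rangle$, substituting $\nabla^2 f=\lambda g-\mathrm{Ric}+\tfrac1m df\otimes df$, and eliminating $\mathrm{Ric}(Df,Df)$ and $\Delta f$ via \eqref{2.13} and the traced quasi-Einstein equation is precisely the route taken in \cite{CSW}. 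The cancellation of the $|Df|^2$ terms and the assembly of the $g(Dr,Df)$ coefficient into $\tfrac{m+2}{2m}$ are routine once the substitutions are made; the polynomial in $r$ and $\lambda$ then factors as stated.

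One remark: in dimension $2n+1$ the traceless Ricci tensor is $\mathrm{Ric}-\tfrac{r}{2n+1}g$, so the completion of the square reads $|\mathrm{Ric}|^2=|\mathrm{Ric}-\tfrac{r}{2n+1}g|^2+\tfrac{r^2}{2n+1}$, exactly as you wrote. The denominator $2n-1$ appearing in the paper's display of \eqref{3.10*} is evidently a typographical slip for $2n+1$; your version is the correct one and is what is actually needed when the formula is invoked later (with $n=1$, $r=0$) in the proof of Theorem~4.4.
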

In the following sections we always suppose that $(M^{2n+1},\phi,\xi,\eta,g)$ is an almost cosymplectic manifold and $g$ represents a quasi-Einstein metric.

\section{Almost cosymplectic $(\kappa,\mu)$-manifolds}
In this section we suppose that $(M^{2n+1},\phi,\xi,\eta,g)$ is an almost cosymplectic $(\kappa,\mu)$-manifold, namely the
curvature tensor satisfies \eqref{4.1}. By definition, Eqs.\eqref{2.2*}-\eqref{2.10} with $\alpha=0$ hold. Furthermore, the following relations are provided (see \cite[Eq.(3.22) and Eq.(3.23)]{MNY}):
\begin{align}
  Q=&2n\kappa\eta\otimes\xi+\mu h,\label{3.1**}\\
  h^2=&\kappa\phi^2\label{3.2}.
\end{align}

Using \eqref{2.2*}, it follows from \eqref{3.1**} that the scalar curvature $r=2n\kappa$ and $Q\xi=2n\kappa\xi.$
By \eqref{3.2}, we find easily that $\kappa\leq0$ and $\kappa=0$ if and only if $M$ is a cosymplectic manifold, thus in the following we always suppose
$\kappa<0$. Moreover, if $\mu=0$ the following conclusion was given.
\begin{theorem}\emph{(\cite[Theorem 4]{D})}\label{T2.1}
 An almost cosymplectic $(\kappa,0)$-manifold for some $\kappa<0$ is locally isomorphic
to a Lie group $G_\rho$ endowed with the almost cosymplectic structure, where
$\rho=\sqrt{-\kappa}$.
\end{theorem}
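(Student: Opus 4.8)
The plan is to reduce everything to the classical principle that a simply connected Riemannian manifold carrying a global orthonormal frame whose structure functions are constant is a Lie group equipped with a left-invariant metric; the whole task is then to produce such a frame, adapted to the self-adjoint operator $h$. I would begin with the spectrum of $h$. With $\mu=0$, relation \eqref{3.2} reads $h^2=\kappa\phi^2$, so setting $\rho=\sqrt{-\kappa}$ gives $h^2=\rho^2 I$ on the contact distribution $\mathcal{D}=\ker\eta$ (where $\phi^2=-I$). Hence $h|_{\mathcal{D}}$ is self-adjoint with the two constant eigenvalues $\pm\rho$; write $\mathcal{D}^+$ and $\mathcal{D}^-$ for the corresponding eigendistributions. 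The trace condition $\mathrm{trace}(h)=0$ in \eqref{2.2*} forces $\dim\mathcal{D}^+=\dim\mathcal{D}^-=n$, and the anticommutation $\phi h=-h\phi$ shows that $\phi$ interchanges $\mathcal{D}^+$ and $\mathcal{D}^-$, yielding an orthogonal splitting $TM=[\xi]\oplus\mathcal{D}^+\oplus\mathcal{D}^-$ into smooth subbundles of constant rank.

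Next I would show that the structure is parallel along the Reeb flow. Setting $\alpha=0$ in \eqref{2.10} gives $R(X,\xi)\xi=-h^2X+\phi(\nabla_\xi h)X$, whereas the $(\kappa,0)$-condition \eqref{4.1} gives $R(X,\xi)\xi=\kappa(X-\eta(X)\xi)=-\kappa\phi^2X=-h^2X$. Comparing the two expressions and using that $\phi$ is injective on $\mathcal{D}$ yields $\nabla_\xi h=0$. Together with $\nabla_\xi\phi=0$ this shows that $\nabla_\xi$ preserves each factor of the splitting, so the eigenframes may be propagated along $\xi$.

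I would then fix a local orthonormal frame $\{\xi,e_1,\dots,e_n,\phi e_1,\dots,\phi e_n\}$ with $e_i\in\mathcal{D}^+$ and $\phi e_i\in\mathcal{D}^-$, and compute its Lie brackets through $[X,Y]=\nabla_XY-\nabla_YX$. The components along $\xi$ are dictated by \eqref{2.5} with $\alpha=0$, namely $\nabla_X\xi=-\phi hX$, which sends $\mathcal{D}^\pm$ into $\mathcal{D}^\mp$ with the constant factor $\mp\rho$; the tangential components are controlled by the Koszul formula together with $d\eta=0$, $d\omega=0$ and the self-adjointness of $h$. The objective is to choose the $e_i$ so that every bracket $[E_a,E_b]=\sum_c c_{ab}^c E_c$ has constant coefficients.

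The hard part will be precisely this last step: establishing the integrability (or controlled non-integrability) of $\mathcal{D}^+$ and $\mathcal{D}^-$ and the constancy of the horizontal bracket coefficients. Because the manifold is only \emph{almost} cosymplectic, the Nijenhuis torsion need not vanish and normality is unavailable, so these relations must be extracted from \eqref{2.4*} with $\alpha=0$ and from the closedness of $\eta$ and $\omega$, fed through the Koszul formula; it is the fact that $\kappa$, hence $\rho$, is a genuine constant that ultimately makes the coefficients constant rather than merely smooth. Once this is secured, the frame spans a finite-dimensional Lie algebra $\mathfrak{g}$; passing to the simply connected cover and integrating $\mathfrak{g}$ via Lie's third theorem produces a simply transitive Lie group action, exhibiting $M$ locally as a Lie group carrying a left-invariant almost cosymplectic structure, and reading off the structure constants — which depend only on $\rho$ — identifies it with $G_\rho$.
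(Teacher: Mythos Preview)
The paper does not prove this statement: Theorem~\ref{T2.1} is quoted from Dacko~\cite[Theorem~4]{D} without proof and then invoked as a black box in the proof of Theorem~\ref{T1}. There is therefore no argument in the present paper to compare your proposal against.

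As a sketch of how the cited result is established, your outline is on the right track. The eigenspace decomposition of $h$ with constant eigenvalues $\pm\rho$ on $\mathcal{D}$, the vanishing $\nabla_\xi h=0$ obtained by comparing \eqref{2.10} (with $\alpha=0$) against the $(\kappa,0)$-relation \eqref{4.1}, and the formula $\nabla_X\xi=-\phi hX$ from \eqref{2.5} are all correct and are the standard opening moves. However, you explicitly leave the decisive step---producing an adapted frame whose structure functions are \emph{constant}---as a ``hard part'' still to be done, and that is exactly where the substance of Dacko's theorem lies. Without actually carrying out the bracket computations for $[e_i,e_j]$, $[e_i,\phi e_j]$, $[\phi e_i,\phi e_j]$ (which requires, in particular, extracting the integrability properties of $\mathcal{D}^{\pm}$ and $[\xi]\oplus\mathcal{D}^{\pm}$ from $d\eta=0$, $d\omega=0$ and the explicit form of $\nabla\xi$), your write-up is a plan rather than a proof.
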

Making use of the above theorem we can prove the following conclusion.
\begin{theorem}\label{T1}
A $(2n+1)$-dimensional almost
cosymplectic $(\kappa,\mu)$-manifold with $\kappa<0$, admitting a closed quasi-Einstein structure $(g, V, m, \lambda)$, is locally isomorphic
to the above Lie group $G_\rho$. Moreover, either $\lambda=0$ or $\lambda=(2n+m)\kappa$.
\end{theorem}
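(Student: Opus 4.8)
The plan is to show that the hypotheses force $\mu=0$, after which Theorem~\ref{T2.1} identifies $M$ locally with $G_\rho$, $\rho=\sqrt{-\kappa}$; the value of $\lambda$ is then read off from a scalar identity. Since $(g,V,m,\lambda)$ is closed, $V^\flat$ is closed, so locally $V=Df$ and the triple $(g,f,m,\lambda)$ satisfies \eqref{1}; thus Lemma~\ref{L3.2} and the identities \eqref{2.13}, \eqref{3.10*} are all at my disposal. The structural input I would keep in front of me is: by \eqref{3.1**} the scalar curvature $r=2n\kappa$ is constant and $Q\xi=2n\kappa\xi$; by \eqref{3.2} and \eqref{2.2*} the operator $h$ is symmetric with $h\xi=0$, anticommutes with $\phi$, and on the distribution $\ker\eta$ satisfies $h^2=-\kappa I$, hence has the two eigenvalues $\pm\sqrt{-\kappa}$. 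I will repeatedly decompose a vector as $Df=\xi(f)\xi+W$ with $W\in\ker\eta$, and decompose an endomorphism of $\ker\eta$ into the part commuting with $h$ (generated by $I,h$) and the part anticommuting with $h$ (generated by $\phi,\phi h$).

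First I would use the constancy of $r$. Because $Dr=0$, identity \eqref{2.13} reduces to $(m-1)Q(Df)=(2n\lambda-r)Df=2n(\lambda-\kappa)Df$. Substituting $Q\xi=2n\kappa\xi$ and $QW=\mu hW$ (from \eqref{3.1**}) and separating the $\xi$- and $\ker\eta$-components gives the scalar relation $(m\kappa-\lambda)\,\xi(f)=0$ together with $(m-1)\mu\,hW=2n(\lambda-\kappa)W$. Consequently, if $\mu\neq0$ and $W\neq0$, then $W$ is an $h$-eigenvector, and applying $h$ once more (using $h^2=-\kappa I$ on $\ker\eta$) turns this into a single scalar identity relating $\mu^2$, $\kappa$ and $\lambda$. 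This already rigidifies the situation but does not by itself kill $\mu$.

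The decisive and hardest step is to extract a second, independent relation from the curvature. For this I would set $Y=\xi$ in Lemma~\ref{L3.2}. The left-hand side $R(X,\xi)Df$ is computed from the $(\kappa,\mu)$-condition \eqref{4.1} together with the curvature symmetries, which express it purely through $X,hX$ and the scalars $X(f),\xi(f)$. The right-hand side needs $(\nabla_\xi Q)X-(\nabla_X Q)\xi$, which I would evaluate from \eqref{3.1**} using $\nabla_X\xi=-\phi hX$ (the case $\alpha=0$ of \eqref{2.5}), the identity $h\phi h=\kappa\phi$ obtained from $\phi h=-h\phi$ and \eqref{3.2}, and the value of $\nabla_\xi h$ read off by comparing \eqref{2.10} with \eqref{4.1}. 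Equating the two expressions and splitting the resulting identity on $\ker\eta$ into its $h$-commuting and $h$-anticommuting parts yields operator equations; the anticommuting ($\phi$-twisted) part is purely geometric, and, once combined with the eigenvalue identity of the previous paragraph, is compatible only with $\mu=0$. Carrying out the $\nabla Q$ computation correctly and keeping the bookkeeping of the $\phi$-twisted versus $h$-commuting terms straight is where I expect the real work, and the danger of sign errors, to lie. With $\mu=0$ in hand, $M$ is an almost cosymplectic $(\kappa,0)$-manifold with $\kappa<0$, and Theorem~\ref{T2.1} gives that it is locally isomorphic to $G_\rho$ with $\rho=\sqrt{-\kappa}$.

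Finally, to determine $\lambda$, I would return to \eqref{3.10*}. With $\mu=0$ the Ricci operator is $Q=2n\kappa\,\eta\otimes\xi$, so the squared-norm term in \eqref{3.10*} is an explicit constant depending only on $n$ and $\kappa$; inserting this together with $Dr=0$ and $\Delta r=0$ collapses \eqref{3.10*} to a single algebraic equation in $\lambda$. I expect this equation to factor so that its only admissible roots are $\lambda=0$ and $\lambda=(2n+m)\kappa$, which, read against the relation $(m\kappa-\lambda)\xi(f)=0$ from the first step, yields the stated dichotomy and finishes the proof.
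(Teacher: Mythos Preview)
Your plan for deriving $\mu=0$ is close in spirit to the paper's: the paper also computes $R(X,Y)V$ (directly from the formula $\nabla_YV=\lambda Y-\mu hY-2n\kappa\eta(Y)\xi+\tfrac{1}{m}g(V,Y)V$, which is exactly the closed-$V$ version of Lemma~\ref{L3.2}), takes the inner product with $\xi$, and then substitutes $\phi X,\phi Y$ to isolate the single term $\mu\,g(Y,h^2\phi X)$, forcing $\mu=0$. So for that step your strategy is correct, though the paper's route of pairing with $\xi$ and using the $\phi$-substitution is shorter than computing $(\nabla_\xi Q)X-(\nabla_XQ)\xi$ and splitting into commuting/anticommuting parts.

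The genuine gap is in your final step. You propose to read off $\lambda$ from \eqref{3.10*} after inserting $r=2n\kappa$ and the explicit $|{\rm Ric}-\frac{r}{2n+1}g|^2$, expecting the resulting quadratic in $\lambda$ to factor with roots $0$ and $(2n+m)\kappa$. It does not. A direct computation gives, with $x=(2n+1)\lambda$,
\[
x^2-(m+4n)\kappa\,x+2nm(2n+1)\kappa^2=0,
\]
and one checks that neither $x=0$ nor $x=(2n+1)(2n+m)\kappa$ solves this (for instance at $\lambda=0$ the left side equals $2nm(2n+1)\kappa^2\neq0$). So \eqref{3.10*} cannot deliver the stated dichotomy. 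In the paper the dichotomy is not the output of a quadratic at all: it falls out of the \emph{same} curvature identity used to kill $\mu$. After $\mu=0$ the remaining $\xi$-component reads
\[
\big(\lambda-(2n+m)\kappa\big)\big(g(V,Y)\eta(X)-g(V,X)\eta(Y)\big)=0,
\]
so either $\lambda=(2n+m)\kappa$ or $V=\eta(V)\xi$; in the latter case a short trace/\,$Y=\xi$ comparison forces $\lambda=0$. In other words, the information you need is already contained in the curvature computation you plan for the first step---you should extract it there rather than discarding it and appealing to \eqref{3.10*}. (Your relation $(m\kappa-\lambda)\xi(f)=0$ from \eqref{2.13} is compatible with this picture, but by itself it also cannot produce the two stated values.)
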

\begin{proof}

In view of \eqref{3.1**} and Eq.\eqref{4.27}, we obtain
\begin{equation}\label{3.3}
  \nabla_YV=\lambda Y-\mu hY-2n\kappa\eta(Y)\xi+\frac{1}{m}g(V,Y)V
\end{equation}
for any vector $Y$.
Using this we compute
\begin{align*}
  R(X,Y)V =& \nabla_X\nabla_YV-\nabla_Y\nabla_XV-\nabla_{[X,Y]}V \\
  = & -\mu (\nabla_Xh)Y-2n\kappa(\nabla_X\eta)(Y)\xi-2n\kappa\eta(Y)\nabla_X\xi\\
  &+\frac{1}{m}g(\nabla_XV,Y)V+\frac{1}{m}g(V,Y)\nabla_XV\\
  &+\mu (\nabla_Yh)X+2n\kappa(\nabla_Y\eta)(X)\xi+2n\kappa\eta(X)\nabla_Y\xi\\
  &-\frac{1}{m}g(\nabla_YV,X)V-\frac{1}{m}g(V,X)\nabla_YV.
\end{align*}
Taking an inn product of the above formula with $\xi$ and using \eqref{4.1}, we have
\begin{align}\label{3.4*}
  &-\kappa(\eta(Y)g(X,V)-\eta(X)g(Y,V))-\mu(\eta(Y)g(hX,V)-\eta(X)g(hY,V))\\
  = & -2\mu g(\phi Y, h^2X)+\frac{1}{m}g(\nabla_XV,Y)g(V,\xi)+\frac{1}{m}g(V,Y)g(\nabla_XV,\xi)\nonumber\\
  &-\frac{1}{m}g(\nabla_YV,X)g(V,\xi)-\frac{1}{m}g(V,X)g(\nabla_YV,\xi)\nonumber\\
  =&-2\mu g(\phi Y, h^2X)+\frac{\lambda-2n\kappa}{m}[g(V,Y)\eta(X)-g(V,X)\eta(Y)].\nonumber
\end{align}
Now replacing $X$ and $Y$ by $\phi X$ and $\phi Y$, respectively, yields
\begin{equation*}
\mu g(Y, h^2\phi X)=0
\end{equation*}
for any vector fields $X,Y$, which implies $\mu=0$. Further, it follows from \eqref{3.4*} that
\begin{equation*}
\frac{\lambda-2n\kappa-m\kappa}{m}[g(V,Y)\eta(X)-g(V,X)\eta(Y)]=0.
\end{equation*}
Putting $Y=\xi$ in the foregoing equation shows either $V=\eta(V)\xi$ or $\lambda=(2n+m)\kappa$.

If $V=\eta(V)\xi$, by \eqref{2.5}, Eq.\eqref{3.3} becomes
\begin{equation}\label{3.17}
  Y(F)\xi-F\phi hY=\lambda Y-\mu hY-2n\kappa\eta(Y)\xi+\frac{1}{m}F^2\eta(Y)\xi,
\end{equation}
where $F=\eta(V)$. Contracting \eqref{3.17} over $Y$ yields
\begin{equation*}
  \xi(F)=(2n+1)\lambda-2n\kappa+\frac{1}{m}F^2
\end{equation*}
since ${\rm trace}(h)=trace(\phi h)=0$. On the other hand,  choosing $Y=\xi$ in \eqref{3.17} we get
\begin{equation*}
  \xi(F)=\lambda -2n\kappa+\frac{1}{m}F^2.
\end{equation*}
Consequently, the preceding two equations imply $\lambda=0.$
\end{proof}
Because $\kappa<0$, by Theorem \ref{T1} the following conclusion is obvious.
\begin{corollary}
There do not exist expanding, closed quasi-Einstein structures $(g, V, m, \lambda)$ on an almost
cosymplectic $(\kappa,\mu)$-manifold with $\kappa<0$.
\end{corollary}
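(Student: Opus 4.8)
The plan is to obtain the corollary as an immediate consequence of Theorem \ref{T1}, in which all the substantive geometric work has already been carried out. Suppose $(g, V, m, \lambda)$ is a closed quasi-Einstein structure on an almost cosymplectic $(\kappa,\mu)$-manifold with $\kappa < 0$. Theorem \ref{T1} asserts that only two values of $\lambda$ are admissible: either $\lambda = 0$ or $\lambda = (2n+m)\kappa$. Thus the whole argument collapses to a sign analysis of these two possibilities, with no additional curvature computation required.

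First I would record the constraints on the parameters. The ambient manifold is $(2n+1)$-dimensional with $n \geq 1$, and the quasi-Einstein parameter satisfies $0 < m$, so that the coefficient $2n + m$ is strictly positive. Together with the standing hypothesis $\kappa < 0$ this forces $(2n+m)\kappa < 0$. Hence the first admissible value gives $\lambda = 0$ (a \emph{steady} structure) and the second gives $\lambda < 0$ (a \emph{shrinking} structure); in neither case can $\lambda$ be strictly positive.

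Finally I would invoke this paper's sign convention, under which a quasi-Einstein structure is \emph{expanding} precisely when $\lambda > 0$. Since both admissible values satisfy $\lambda \leq 0$, an expanding closed quasi-Einstein structure cannot occur, which is the claim. The only genuine pitfall here is bookkeeping rather than mathematics: one must respect the convention that expanding means $\lambda > 0$ (not the reverse sign), and one must confirm $2n + m > 0$ so that $(2n+m)\kappa$ inherits the sign of $\kappa$. Beyond these two checks, the statement follows at once from Theorem \ref{T1}.
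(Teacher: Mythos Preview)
Your proposal is correct and matches the paper's own approach: the paper simply states that the corollary is ``obvious'' from Theorem~\ref{T1} together with $\kappa<0$, and you have spelled out exactly that sign analysis. No additional argument is needed.
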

When $V=Df$, it is clear that $V^\flat$ is closed, thus we have
\begin{corollary}
An almost
$(\kappa,\mu)$-cosymplectic manifold with $\kappa<0$, admitting a quasi-Einstein structure $(g, f, m, \lambda)$, is locally isomorphic
to the above Lie group $G_\rho$.
\end{corollary}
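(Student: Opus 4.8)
The plan is to obtain this corollary as an immediate specialization of Theorem~\ref{T1}, by recognizing the function-type quasi-Einstein structure $(g,f,m,\lambda)$ governed by \eqref{1} as a particular closed vector-field structure $(g,V,m,\lambda)$ governed by \eqref{4.27}. First I would put $V:=Df$, whose associated one-form is $V^\flat=df$. Being exact, $df$ is automatically closed, i.e. $dV^\flat=d(df)=0$; hence $(g,Df,m,\lambda)$ is a \emph{closed} quasi-Einstein structure in the precise sense required by Theorem~\ref{T1}.

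Next I would check that \eqref{4.27} specialized to $V=Df$ is literally \eqref{1}. For a gradient field the symmetry of the Hessian gives $\tfrac12\mathcal{L}_{Df}g=\nabla^2 f$, while $V^\flat\otimes V^\flat=df\otimes df$, so that
\begin{equation*}
\mathrm{Ric}+\tfrac12\mathcal{L}_{Df}g-\tfrac1m\,V^\flat\otimes V^\flat=\mathrm{Ric}+\nabla^2 f-\tfrac1m\,df\otimes df=\lambda g.
\end{equation*}
Thus the gradient quasi-Einstein condition \eqref{1} is exactly \eqref{4.27} evaluated at $V=Df$, and all the hypotheses of Theorem~\ref{T1} are in force: the ambient space is an almost cosymplectic $(\kappa,\mu)$-manifold with $\kappa<0$ carrying a closed quasi-Einstein structure.

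Applying Theorem~\ref{T1} then yields at once that $\mu=0$ and that the manifold is locally isomorphic to the Lie group $G_\rho$ with $\rho=\sqrt{-\kappa}$, which is the assertion (the theorem additionally forces $\lambda=0$ or $\lambda=(2n+m)\kappa$, though only the local isomorphism is claimed here). Because every step is a direct reduction, there is essentially no analytic obstacle to overcome; the sole point deserving explicit mention — and the one I would single out — is the identification of the gradient structure \eqref{1} with a closed structure \eqref{4.27}, resting entirely on the closedness of the exact form $df$. Beyond invoking the computation already carried out in Theorem~\ref{T1}, nothing further is needed.
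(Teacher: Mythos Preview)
Your proposal is correct and follows essentially the same approach as the paper: the paper simply remarks that when $V=Df$ the one-form $V^\flat=df$ is closed, and then invokes Theorem~\ref{T1}. Your write-up merely makes explicit the identification $\tfrac12\mathcal{L}_{Df}g=\nabla^2 f$ and $V^\flat\otimes V^\flat=df\otimes df$, which the paper leaves implicit.
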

Next we consider the potential vector field $V$ being collinear with Reeb vector field $\xi$ and prove the following non-existence.
\begin{theorem}
There do not exist quasi-Einstein structures $(g, V, m, \lambda)$ with $V=\eta(V)\xi$ on a compact almost $(\kappa,\mu)$-cosymplectic manifold with $\kappa<0$.
\end{theorem}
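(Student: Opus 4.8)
The plan is to feed the ansatz $V=\eta(V)\xi$ straight into the quasi-Einstein equation \eqref{4.27} and to extract two scalar identities whose comparison first forces $\lambda=0$ and then clashes with compactness. Write $F=\eta(V)$, so that $V=F\xi$ and $V^\flat=F\eta$. Since here $\alpha=0$, Eq.\eqref{2.5} gives $\nabla_X\xi=-\phi hX$, hence $\nabla_X V=X(F)\xi-F\phi hX$. Because $\phi$ is skew and $h$ is symmetric, the operator $\phi h$ is $g$-symmetric by \eqref{2.2*}, so the symmetric tensor $\tfrac12\mathcal{L}_V g$ reads
\[
\tfrac12\mathcal{L}_V g(X,Y)=\tfrac12\{X(F)\eta(Y)+Y(F)\eta(X)\}-F\,g(\phi hX,Y).
\]
Inserting this together with $\mathrm{Ric}=2n\kappa\,\eta\otimes\eta+\mu\,g(h\cdot,\cdot)$ from \eqref{3.1**} and $V^\flat\otimes V^\flat=F^2\,\eta\otimes\eta$ turns \eqref{4.27} into a single symmetric $(0,2)$ equation that I would then probe in two complementary ways.

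First I would take the $g$-trace. Using $\mathrm{trace}(h)=0$, $g(h\xi,\xi)=0$ from \eqref{2.2*} and $\mathrm{trace}(\phi h)=0$ from \eqref{2.9}, the trace of $\tfrac12\mathcal{L}_V g$ equals $\mathrm{div}(F\xi)=\xi(F)$ (note $\mathrm{div}\,\xi=-\mathrm{trace}(\phi h)=0$), while $\mathrm{trace}(\mathrm{Ric})=r=2n\kappa$; this yields
\[
2n\kappa+\xi(F)-\tfrac1m F^2=(2n+1)\lambda.
\]
Next I would evaluate the same equation at $(\xi,\xi)$; since $h\xi=0$ the Lie-derivative and curvature terms collapse and I get
\[
2n\kappa+\xi(F)-\tfrac1m F^2=\lambda.
\]
Subtracting the two identities forces $2n\lambda=0$, i.e.\ $\lambda=0$, and therefore $\xi(F)=-2n\kappa+\tfrac1m F^2$.

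Finally I would bring in compactness. Because $\kappa<0$, the right-hand side $-2n\kappa+\tfrac1m F^2$ is \emph{strictly} positive at every point, so $\xi(F)>0$ on all of $M$. On the other hand $\xi(F)=\mathrm{div}(F\xi)$ (again because $\mathrm{div}\,\xi=0$), and integrating over the closed manifold gives $\int_M\xi(F)\,dV=\int_M\mathrm{div}(F\xi)\,dV=0$, which contradicts the strict positivity of the integrand. Hence no quasi-Einstein structure of this type can exist.

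The computations are routine, and I expect the only genuinely load-bearing step to be the derivation of $\lambda=0$: it is essential that the full trace and the $(\xi,\xi)$-evaluation have \emph{identical} left-hand sides, which hinges precisely on $\mathrm{trace}(h)=\mathrm{trace}(\phi h)=0$, so that the entire discrepancy between the two contractions is the factor $2n\lambda$. After that, compactness enters only through the divergence theorem applied to $\xi(F)=\mathrm{div}(F\xi)$, and the hypothesis $\kappa<0$ is exactly what makes $\xi(F)$ pointwise positive. (As a side observation, restricting the tensor equation to the contact distribution and using $h^2=\kappa\phi^2$ from \eqref{3.2} already gives $\mu h=F\phi h$ there once $\lambda=0$, which forces $\mu=0$ and $F\equiv0$ pointwise; the integral argument above is the cleaner route that directly exploits the compactness hypothesis.)
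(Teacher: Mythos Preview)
Your proof is correct and follows essentially the same line as the paper's: substitute $V=F\xi$ into \eqref{4.27} using \eqref{3.1**} and \eqref{2.5}, first establish $\lambda=0$, then read off $\xi(F)=-2n\kappa+\tfrac1mF^2=\operatorname{div}V$ and derive the contradiction from the divergence theorem on the compact manifold. The only minor difference is in how $\lambda=0$ is obtained: the paper restricts the tensor equation to the contact distribution (replacing $X,Y$ by $\phi X,\phi Y$) and contracts there, whereas you compare the full $g$-trace with the $(\xi,\xi)$-evaluation; both contractions kill all terms except $\lambda$ precisely because $\mathrm{trace}(h)=\mathrm{trace}(\phi h)=0$, so the two variants are interchangeable.

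Your parenthetical side observation is in fact sharper than either argument needs. With the $-Fg(\phi hX,Y)$ term from $\tfrac12\mathcal{L}_Vg$ retained (the paper drops it in \eqref{4.30}, though this does not affect the trace step), the restriction to the contact distribution after $\lambda=0$ reads $\mu h=F\phi h$; applying $\phi$ and using $\phi h=-h\phi$, $\phi^2h=-h$ gives $(\mu^2+F^2)h=0$, hence $\mu=0$ and $F\equiv0$ pointwise since $h\neq0$ everywhere when $\kappa<0$. This already contradicts \eqref{4.27} (the Ricci tensor $2n\kappa\,\eta\otimes\eta$ is nonzero) \emph{without} invoking compactness. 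So the compactness hypothesis in the theorem is, strictly speaking, superfluous; the integral argument you and the paper give is simply the route that matches the stated hypotheses.
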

\begin{proof}
Suppose $V=F\xi$ for some function $F$. Differentiating this along any vector field $Y$ and using \eqref{2.5}, we get
\begin{equation}\label{4.18}
\nabla_YV=Y(F)\xi-F\phi hY.
\end{equation}
By \eqref{3.1**} and \eqref{4.18}, \eqref{4.27} becomes
\begin{align}\label{4.30}
  &\Big(2n\kappa-\frac{F^2}{m}\Big)\eta(X)\eta(Y)+\mu g(hX,Y)\\
  &+\frac{1}{2}[Y(F)\eta(X)+X(F)\eta(Y)] =\lambda g(X,Y).\nonumber
\end{align}
Replacing $X$ and $Y$ by $\phi X$ and $\phi Y$, respectively, we find
\begin{equation*}
-\mu g(hX,Y)=\lambda g(\phi X,\phi Y).
\end{equation*}
Letting $X=Y$ and contracting $X$ gives $\lambda=0$ since $\mathrm{trace}(h)=0$. Thus, by taking $Y=\xi$ in \eqref{4.30}, we derive
\begin{equation*}
  \xi(F)=-\Big(2n\kappa-\frac{F^2}{m}\Big).
\end{equation*}
Moreover, using \eqref{2.9} we derive from \eqref{4.18} that
\begin{equation*}
  0=\int_M{\rm div}VdM=\int_M\xi(F)dM=-\int_M\Big(2n\kappa-\frac{F^2}{m}\Big)dM,
\end{equation*}
where $dM$ denotes the volume form of $M$. Since $\kappa<0$, the above relation is impossible.
\end{proof}

\section{Almost $\alpha$-cosymplectic manifolds }
In this section we study  an almost $\alpha$-cosymplectic manifold admitting quasi-Einstein structures. First we consider $V$ being collinear with Reeb vector field $\xi$.
\begin{theorem}
Let $(M,\phi,\xi,\eta,g)$ be a compact almost $\alpha$-cosymplectic manifold. Suppose that $M$ admits a quasi-Einstein structure $(g, V, m, \lambda)$ with $V=\eta(V)\xi$. If
$\alpha(3m+2\eta(V))\geq0$, then either $M$ is Einstein or $M$ is locally the
product of a K\"{a}hler manifold and an interval or unit circle $S^1$.
\end{theorem}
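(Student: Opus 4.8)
The plan is to write the potential field as $V=F\xi$ with $F=\eta(V)$, convert the quasi-Einstein equation \eqref{4.27} into an explicit pointwise formula for the Ricci tensor, and then integrate its scalar consequences over the compact manifold. First I would differentiate $V=F\xi$ along an arbitrary $Y$ and substitute \eqref{2.5}, obtaining
\begin{equation*}
\nabla_Y V=Y(F)\xi+\alpha F\,Y-\alpha F\eta(Y)\xi-F\phi hY.
\end{equation*}
Feeding the symmetric part of this into \eqref{4.27} (with $V^\flat=F\eta$) expresses $\mathrm{Ric}$ in terms of $F$, $h$, $\alpha$ and $\lambda$; the pieces to track are $\alpha F\,g(\phi^2X,Y)$, $F\,g(\phi hX,Y)$, the rank-one term $\tfrac{F^2}{m}\eta\otimes\eta$, and the gradient terms $\tfrac12\big(X(F)\eta(Y)+Y(F)\eta(X)\big)$.

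The decisive scalar identity comes from the $\xi$-direction. On the one hand, contracting \eqref{2.10} and using \eqref{2.9} together with $\mathrm{trace}(h)=0$ gives the intrinsic value $\mathrm{Ric}(\xi,\xi)=-2n\alpha^2-|h|^2$; on the other hand, the formula just derived gives $\mathrm{Ric}(\xi,\xi)=\lambda+\tfrac{F^2}{m}-\xi(F)$, where $\nabla_\xi\xi=0$ and $h\xi=0$ are used. Equating these yields the pointwise relation
\begin{equation*}
\xi(F)=\lambda+\frac{F^2}{m}+2n\alpha^2+|h|^2.
\end{equation*}
In parallel I would record the two divergences $\mathrm{div}\,\xi=2n\alpha$ and $\mathrm{div}\,V=\xi(F)+2n\alpha F$, both immediate from \eqref{2.5}, \eqref{2.9} and $\mathrm{trace}(h)=0$; note also that evaluating the displayed relation at an interior extremum of $F$ (where $DF=0$, so $\xi(F)=0$) already forces $\lambda\le0$.

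Now compactness enters. Substituting the pointwise relation into $\int_M\mathrm{div}\,V=0$ turns it into an integral identity whose integrand is the sum of $\lambda$, the manifestly non-negative terms $\tfrac{F^2}{m},\,2n\alpha^2,\,|h|^2$, and the indefinite term $2n\alpha F$. The role of the hypothesis $\alpha(3m+2\eta(V))\ge0$ is precisely to dominate this indefinite contribution: after weighting by $m$ and by $F$ and combining with the vanishing integrals $\int_M\mathrm{div}(F\xi)=0$ and $\int_M\mathrm{div}(F^2\xi)=0$ — which is where the factor $3m+2F$ is expected to emerge multiplying a non-negative quantity — the total integrand becomes signed, so the vanishing of the integral forces the terms to vanish. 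This gives $|h|^2\equiv0$, hence $h=0$, so that $M$ is normal and therefore $\alpha$-cosymplectic; one then uses $\int_M\mathrm{div}\,\xi=2n\alpha\,\mathrm{Vol}(M)=0$ to read off $\alpha=0$, which makes $M$ cosymplectic and hence locally a Riemannian product of a K\"ahler manifold with an interval or $S^1$. The complementary branch, in which the non-negative terms are balanced against $\lambda$ rather than vanishing individually, is the degenerate case $F\equiv\mathrm{const}$ (so $DF=0$ and $V$ is Killing); there the Ricci formula collapses to $\mathrm{Ric}=\lambda g$ and $M$ is Einstein.

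I expect the main obstacle to be this integral-combination step: arranging the weighted divergences so that the exact factor $\alpha(3m+2\eta(V))$ appears in front of a non-negative expression, while simultaneously controlling the sign of $\lambda$ furnished by the extremum argument. A secondary delicate point is the geometric passage from $h=0$ to the stated product structure, where one must explicitly invoke normality ($h=\tfrac12\mathcal{L}_\xi\phi=0$) to upgrade the almost $\alpha$-cosymplectic structure to an $\alpha$-cosymplectic one before applying $\alpha=0$.
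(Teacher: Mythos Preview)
Your outline has a genuine gap at the integral step. The identity you extract from $\mathrm{Ric}(\xi,\xi)$, namely
\[
\xi(F)=\lambda+\frac{F^2}{m}+2n\alpha^2+|h|^2,
\]
carries the constant $\lambda$ and only the directional derivative $\xi(F)$. No combination of the integrals of $\xi(F^k)$ (equivalently of $\mathrm{div}(F^k\xi)$) will simultaneously eliminate $\lambda$ and produce the full gradient term $|DF|^2$: multiplying by $F^{k-1}$ and integrating always leaves $\lambda\!\int F^{k-1}$, and since you only know $\lambda\le 0$, the resulting integrand is never of one sign. So the step ``the total integrand becomes signed, so the vanishing of the integral forces the terms to vanish'' does not go through.

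The paper gets around this by a different scalar identity, obtained from the contracted Bianchi relation $\mathrm{div}\,Q=\tfrac12\nabla r$: differentiating the explicit formula \eqref{5.7} for $Q$ and contracting yields
\[
0=-\tfrac12\Delta F + F\bigl(2n\alpha+\mathrm{trace}(h^2)\bigr)+\frac{F}{m}\xi(F)+\frac{2nF^2}{m}\alpha,
\]
which is $\lambda$-free and contains $\Delta F$ rather than $\xi(F)$. Multiplying by $2F$ converts $-F\Delta F$ into $-\tfrac12\Delta F^2+|DF|^2$ and $\tfrac{2F^2}{m}\xi(F)$ into $\tfrac{2}{3m}\,\mathrm{div}(F^2V)$; integrating then gives
\[
0=\int_M\Big\{|DF|^2+2F^2\,\mathrm{trace}(h^2)+4nF^2\alpha\Big(1+\frac{2F}{3m}\Big)\Big\}\,dM,
\]
and it is precisely here that the hypothesis $\alpha(3m+2F)\ge 0$ makes every summand nonnegative. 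This forces $DF=0$ and $F^2\,\mathrm{trace}(h^2)=0$, splitting cleanly into $F\equiv 0$ (Einstein) versus $F\ne 0$ constant with $h=0$, after which $\alpha=0$ and the product conclusion follow. Your ``complementary branch'' is also misstated: $F$ being a nonzero constant does not make the Ricci formula collapse to $\lambda g$ (the terms $F\alpha\phi^2$, $F\phi h$ and $\tfrac{F^2}{m}\eta\otimes\xi$ survive); the Einstein case is exactly $F\equiv 0$.
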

\begin{proof}
 As before we set $V=F\xi$ for some function $F$. By \eqref{2.5}, we have
\begin{equation}\label{5.1}
  \nabla_XV=X(F)\xi-F(\alpha\phi^2X+\phi hX).
\end{equation}
Using \eqref{5.1}, Formula \eqref{4.27} becomes
\begin{align*}
  Ric(X,Y)=&\lambda g(X,Y)-\frac{1}{2}(X(F)\eta(Y)+Y(F)\eta(X))\\
  &+Fg((\alpha\phi^2X+\phi hX),Y)+\frac{F^2}{m}\eta(X)\eta(Y).
\end{align*}
This is equivalent to
\begin{equation}\label{5.7}
  QX=\lambda X-\frac{1}{2}\Big(X(F)\xi+\eta(X)DF\Big)+F(\alpha\phi^2X+\phi hX)+\frac{F^2}{m}\eta(X)\xi.
\end{equation}
Differentiating \eqref{5.7} along $Y$ and using \eqref{2.5}, we conclude
\begin{align*}
 (\nabla_YQ)X=&-\frac{1}{2}\Big(g(X,\nabla_YDF)\xi+X(F)\nabla_Y\xi+g(\nabla_Y\xi,X)DF+\eta(X)\nabla_YDF\Big)\\
&+F\Big(\alpha(\nabla_Y\phi^2)X+(\nabla_Y\phi)hX+\phi(\nabla_Yh)X\Big)\nonumber\\
&+\frac{2F}{m}Y(F)\eta(X)\xi+\frac{F^2}{m}g(\nabla_Y\xi,X)\xi+\frac{F^2}{m}\eta(X)\nabla_Y\xi.\nonumber
\end{align*}
Contracting the pervious formula over $Y$ gives
\begin{align*}
 \frac{1}{2}\xi(r)=&-\frac{1}{2}\Big(\Delta F+2n\alpha\xi(F)+\xi(\xi(F))\Big)\\
&+F\Big(2n\alpha+{\rm trace}(h^2)\Big)+\frac{2F}{m}\xi(F)+\frac{2nF^2}{m}\alpha.\nonumber
\end{align*}

On the other hand, from \eqref{5.7} we have
\begin{equation*}
  r=(2n+1)\lambda-\xi(F)-2nF\alpha+\frac{F^2}{m}.
\end{equation*}
Inserting this into the foregoing relation gives
\begin{align}\label{6.30}
 0=-\frac{1}{2}\Delta F+F\Big(2n\alpha+{\rm trace}(h^2)\Big)+\frac{F}{m}\xi(F)+\frac{2nF^2}{m}\alpha.
\end{align}
Using \eqref{5.1} again and recalling \eqref{2.9}, we get ${\rm div}V=\xi(F)+2n\alpha F$. Thus
\begin{align}\label{6.31}
  {\rm div}(F^2V) =&V(F^2)+F^2{\rm div}V \\
  = & 3F^2\xi(F)+2n\alpha F^3\nonumber
\end{align}
Since $\Delta F^2=2F\Delta F+2\|DF\|^2$, multiplying \eqref{6.30} by $F$ and using \eqref{6.31} we give
\begin{align}\label{4.24}
 0=&-\frac{1}{2}\Delta F^2+\|DF\|^2+2F^2{\rm trace}(h^2)+\frac{2}{3m}{\rm div}(F^2V)+4nF^2\alpha(1+\frac{2F}{3m}).
\end{align}
Integrating this over $M$, we know
\begin{align*}
 0=\int_M\Big\{\|DF\|^2+2F^2{\rm trace}(h^2)+4nF^2\alpha\Big(1+\frac{2F}{3m}\Big)\Big\}dM.
\end{align*}

Under the assumption, we see that $F$ is constant. Furthermore, if $F=0$ it is obvious that $M$ is Einstein, and $h=0$ and $\alpha\Big(1+\frac{2F}{3m}\Big)=0$ if $F\neq0$.
Combining \eqref{6.31} with \eqref{4.24}, we find $\alpha=0$. Hence $M$ is a cosymplectic manifold. We complete the proof by Blair's result (cf.\cite{B}).
 \end{proof}

In the following we study the three dimensional case.
\begin{theorem}\label{T4.2}
Let $(M^3,\phi,\xi,\eta,g)$ be an $\alpha$-almost cosymplectic manifold. Suppose that $M$ admits a non-trivial quasi-Einstein structure $(g, V, m, \lambda)$ with $V=\eta(V)\xi$. Then  either $M$ is locally the
product of a K\"{a}hler manifold and an interval or unit circle $S^1$, or $V$ has constant length $m$. Moreover, if $M$ is compact, it is locally the
product of a K\"{a}hler manifold and an interval or unit circle $S^1$.
\end{theorem}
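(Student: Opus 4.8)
The plan is to run the same reduction as in the preceding theorem and then exploit the two features special to dimension three: the operator $h=\frac12\mathcal{L}_\xi\phi$ has, on the contact distribution $\ker\eta$, only the eigenvalues $\pm\beta$ for a single nonnegative function $\beta$ (since $h$ is self-adjoint, $h\xi=0$, $\mathrm{trace}(h)=0$ and $\phi h=-h\phi$), so that $\mathrm{trace}(h^2)=2\beta^2$ and $h^2=-\beta^2\phi^2$; and the fact that in dimension three the full curvature tensor is determined by the Ricci tensor. First I would set $V=F\xi$ with $F=\eta(V)$, so that \eqref{5.1} and \eqref{5.7} give $\nabla V$ and the Ricci operator $Q$ explicitly, and \eqref{6.30} and \eqref{4.24} (now with $2n+1=3$) are available for free.

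For the first dichotomy (not assuming compactness), the plan is to produce two scalar relations and substitute them into \eqref{6.30}. The diagonal relation $\mathrm{Ric}(\xi,\xi)=-2\alpha^2-\mathrm{trace}(h^2)$ follows cleanly from the combination \eqref{2.6}, which contains no $\nabla_\xi h$ term; comparing it with $\mathrm{Ric}(\xi,\xi)=g(Q\xi,\xi)=\lambda-\xi(F)+\frac{F^2}{m}$ read off from \eqref{5.7} yields
$$\lambda-\xi(F)+\tfrac{F^2}{m}=-2\alpha^2-2\beta^2 .$$
The second relation is an evolution equation for the eigenvalue: evaluating \eqref{5.7} and the structural identity \eqref{2.10} on an eigenframe $e_1,\,e_2=\phi e_1,\,\xi$ of $h$ and matching the off-diagonal ($e_2$) component of $R(e_1,\xi)\xi$ should give $\xi(\beta)=(F-2\alpha)\beta$, hence $\xi(\mathrm{trace}(h^2))=2(F-2\alpha)\,\mathrm{trace}(h^2)$. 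Here the $\nabla_\xi h$ term in \eqref{2.10} is controlled by the fact that $\nabla_\xi h$ is symmetric and anticommutes with $\phi$ (so that $\mathrm{trace}(\phi\nabla_\xi h)=-\mathrm{trace}(\phi\nabla_\xi h)=0$ and, in dimension three, $g((\nabla_\xi h)e_1,e_1)=\xi(\beta)$). Feeding both relations into \eqref{6.30} collapses it to a single identity which I expect to factor so that the vanishing of one factor forces the cosymplectic situation ($\alpha=0$ and $h=0$), whence by Blair's result \cite{B} $M$ is locally a product of a K\"ahler surface with an interval or $S^1$, while the vanishing of the other factor forces $F^2=m^2$, i.e. $\|V\|^2=F^2=m^2$ so that $V$ has constant length $m$.

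For the compact case the argument is shorter and avoids the factorization entirely. From \eqref{2.5} one gets $\mathrm{div}\,\xi=2n\alpha$, so $\int_M 2n\alpha\,dM=0$ forces $\alpha=0$; thus a compact almost $\alpha$-cosymplectic manifold is automatically almost cosymplectic. With $\alpha=0$, equation \eqref{4.24} reduces to $0=-\tfrac12\Delta F^2+\|DF\|^2+2F^2\,\mathrm{trace}(h^2)+\tfrac{2}{3m}\mathrm{div}(F^2V)$, and integrating over $M$ annihilates the $\Delta$ and divergence terms, leaving $0=\int_M\big(\|DF\|^2+2F^2\,\mathrm{trace}(h^2)\big)dM$. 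Both integrands are nonnegative, so $F$ is constant and $F^2\,\mathrm{trace}(h^2)\equiv0$; since the structure is non-trivial, $F$ is a nonzero constant, whence $\mathrm{trace}(h^2)=0$ and $h=0$. Therefore $M$ is cosymplectic and again locally the stated product.

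The hard part will be the non-compact factorization. Establishing $\xi(\beta)=(F-2\alpha)\beta$ requires carefully extracting the $\nabla_\xi h$ contribution to \eqref{2.10}, and closing the identity coming from \eqref{6.30} also requires pinning down the horizontal gradient of $F$ (equivalently the mixed components $\mathrm{Ric}(\xi,e_i)=-\tfrac12 e_i(F)$), so as to rewrite $\Delta F$ purely in terms of $\xi$-derivatives; only then does the expected splitting between the cosymplectic branch and the branch $F^2=m^2$ become visible. This is precisely where the single-eigenvalue structure of $h$ in dimension three is indispensable: in higher dimensions $h$ may carry several distinct eigenvalues, the evolution of the eigenvalues no longer reduces to one scalar equation, and the clean dichotomy breaks down.
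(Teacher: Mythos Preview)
Your compact-case argument is fine and indeed slightly cleaner than the paper's: you kill $\alpha$ first via $\int_M \mathrm{div}\,\xi = 2n\alpha\,\mathrm{vol}(M)=0$, then integrate \eqref{4.24} to force $DF=0$ and $h=0$. The paper instead establishes $F$ constant first (as part of the general argument) and only then uses \eqref{6.31} to kill $\alpha$.

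For the non-compact dichotomy, however, your route diverges substantially from the paper's, and the plan as stated has a real gap. The paper does \emph{not} carry the eigenvalue $\beta$ of $h$ through the computation. Its first and decisive move is to insert the explicit Ricci operator \eqref{5.7} into the three-dimensional curvature decomposition \eqref{5.4*}, compute $R(X,Y)\xi$ as in \eqref{6.32}, and compare the result against \eqref{2.6}; this forces $h=0$ outright, so that $M$ is already $\alpha$-cosymplectic before any dichotomy appears. Once $h=0$, the paper has the clean formula $R(X,Y)\xi=\alpha^2(\eta(Y)X-\eta(X)Y)$; comparing with \eqref{6.32} on $\phi X,\phi Y$ gives $\phi Y(F)\,\phi X=\phi X(F)\,\phi Y$, hence $DF=\xi(F)\xi$. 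Only then can $\Delta F$ be written purely in terms of $\xi$-derivatives, and feeding that into \eqref{6.30} produces the factorization
\[
\alpha\Big[-(\lambda+2\alpha^2)+2F+\tfrac{F^2}{m}\Big]=0,
\]
so the dichotomy is $\alpha=0$ (cosymplectic) versus $F$ constant with, by \eqref{6.33}, $F=-m$.

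Your plan instead keeps $\beta$ alive and hopes that the combination of $\mathrm{Ric}(\xi,\xi)=-2\alpha^2-2\beta^2$ and $\xi(\beta)=(F-2\alpha)\beta$ will make \eqref{6.30} factor. But \eqref{6.30} contains $\Delta F$, and you correctly identify that controlling $\Delta F$ requires the horizontal gradient of $F$, i.e.\ the mixed components $\sigma(e_i)=\mathrm{Ric}(\xi,e_i)=-\tfrac12 e_i(F)$. In the paper these vanish precisely because $h=0$ has already been established; without that step, $\sigma(e_i)$ is an independent structural function (it appears explicitly in Lemma~\ref{L4.1}), so your two scalar relations are not enough to close the system, and the expected factorization into ``cosymplectic versus $F^2=m^2$'' does not emerge. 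Moreover, your evolution equation $\xi(\beta)=(F-2\alpha)\beta$, while correct, never enters \eqref{6.30} directly---\eqref{6.30} contains $\beta^2$, not $\xi(\beta^2)$---so it is not clear how you intend to use it. The missing idea is exactly the paper's: exploit the full three-dimensional curvature formula \eqref{5.4*} to eliminate $h$ at the outset, rather than attempting to track it.
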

\begin{proof}
First it is well known that the curvature tensor of a 3-dimensional Riemannian manifold is given by
\begin{align}\label{5.4*}
  R(X,Y)Z=&g(Y,Z)QX-g(X,Z)QY+g(QY,Z)X-g(QX,Z)Y\\
           &-\frac{r}{2}\{g(Y,Z)X-g(X,Z)Y\}.\nonumber
  \end{align}
Hence substituting \eqref{5.7} into \eqref{5.4*} yields
\begin{align*}
  R(X,Y)Z=&g(Y,Z)\Big[-\frac{1}{2}\Big(X(F)\xi+\eta(X)DF\Big)+F(\alpha\phi^2X+\phi hX)+\frac{F^2}{m}\eta(X)\xi\Big]\\
&-g(X,Z)\Big[-\frac{1}{2}\Big(Y(F)\xi+\eta(Y)DF\Big)+F(\alpha\phi^2Y+\phi hY)+\frac{F^2}{m}\eta(Y)\xi\Big]\nonumber\\
&+g\Big(-\frac{1}{2}\Big(Y(F)\xi+\eta(Y)DF\Big)+F(\alpha\phi^2Y+\phi hY)+\frac{F^2}{m}\eta(Y)\xi,Z\Big)X\nonumber\\
&-g\Big(-\frac{1}{2}\Big(X(F)\xi+\eta(X)DF\Big)+F(\alpha\phi^2X+\phi hX)+\frac{F^2}{m}\eta(X)\xi,Z\Big)Y\nonumber\\
&+\Big(2\lambda-\frac{r}{2}\Big)\{g(Y,Z)X-g(X,Z)Y\}.\nonumber
  \end{align*}
Putting $Z=\xi$ yields
  \begin{align}\label{6.32}
  R(X,Y)\xi=&\frac{1}{2}Y(F)\phi^2X-\frac{1}{2}X(F)\phi^2Y+F\eta(Y)(\alpha\phi^2X+\phi hX)\\
  &+F\eta(X)(\alpha\phi^2Y+\phi hY)\nonumber\\
&+\Big(\frac{F^2}{m}-\frac{\xi(F)}{2}-F\alpha+2\lambda-\frac{r}{2}\Big)\{\eta(Y)X-\eta(X)Y\}.\nonumber
  \end{align}
Moreover, putting $Y=\xi$ in \eqref{6.32} we obtain from \eqref{2.6}
  \begin{equation*}
    \Big(\alpha^2+\frac{F^2}{m}-\xi(F)-F\alpha+2\lambda-\frac{r}{2}\Big)\phi^2 X=h^2X.
  \end{equation*}

Since the scalar curvature $r=3\lambda-\xi(F)-2F\alpha+\frac{F^2}{m}$, which is followed from \eqref{5.7}, inserting this into the pervious relation we have
  \begin{equation}\label{4.27*}
    \Big(\alpha^2+\frac{F^2}{2m}-\frac{1}{2}\xi(F)+\frac{1}{2}\lambda\Big)\phi^2 X=h^2X.
  \end{equation}

Now taking the inner product of \eqref{4.27*} with $\phi X$, we know $\phi h^2X=0$,  which implies $h=0$, i.e. $M$ is an $\alpha$-cosymplectic manifold. Moreover, Eq.\eqref{4.27*} implies
\begin{equation}\label{6.33}
  2\alpha^2+\frac{F^2}{m}-\xi(F)+\lambda=0.
\end{equation}

For an $\alpha$-cosymplectic manifold, the following formula holds (see \cite{OAM}):
\begin{equation*}
 R(X,Y)\xi=\alpha^2\{\eta(Y)X-\eta(X)Y\},
\end{equation*}
thus by comparing with \eqref{6.32} and replacing $X$ by $\phi X$ and $Y$ by $\phi Y$ respectively, we find
\begin{equation*}
\phi Y(F)\phi X=\phi X(F)\phi Y.
\end{equation*}
Moreover, letting $Y=DF$ gives $\phi X(F)\phi DF=0$, which implies $DF=\xi(F)\xi$. Therefore, using \eqref{6.33} we compute
\begin{equation*}
  \Delta F={\rm div}(DF)=\xi(\xi(F))+2\xi(F)\alpha=\frac{2F\xi(F)}{m}+ 2\xi(F)\alpha.
\end{equation*}
Inserting this and \eqref{6.33} into \eqref{6.30}, we get
\begin{align*}
 0=&\alpha\Big[-(\lambda+2\alpha^2)+2F+\frac{F^2}{m}\Big].\nonumber
\end{align*}
If $\alpha=0$, $M$ is cosymplectic. If $\alpha\neq0$, The above formula implies $-(\lambda+2\alpha^2)+2F+\frac{F^2}{m}=0$, that shows that $F$ is constant. Recalling \eqref{6.33} we have $F=-m$ for a non-trivial quasi-Einstein structure.
If $M$ is compact, $\alpha=0$ from \eqref{6.31}. Therefore we complete the proof.
\end{proof}
If $V$ is a conformal vector field, we obtain
\begin{theorem}
Let $(M^3,\phi,\xi,\eta)$ be an almost $\alpha$-cosymplectic manifold. Suppose that $M$ admits a quasi-Einstein structure $(g, V, m, \lambda)$ with $V$ being a conformal vector field. Then $V$ is Killing and $M$ is of constant scalar curvature.
\end{theorem}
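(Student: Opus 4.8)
The plan is to convert the conformal hypothesis into an explicit form of the Ricci operator, read off the three-dimensional curvature, and then play the resulting curvature identity against the structural equations \eqref{2.6} and \eqref{2.10}. Since $V$ is conformal I may write $\frac12\mathcal{L}_Vg=\rho g$ with $\rho=\frac13\,\mathrm{div}\,V$, so that \eqref{4.27} becomes $QX=(\lambda-\rho)X+\frac1m g(V,X)V$ and, tracing, $r=3(\lambda-\rho)+\frac1m|V|^2$. Alongside this I record the standard conformal identities $\nabla_XV=\rho X+\varphi X$ with $\varphi$ skew, $g(\nabla_VV,Y)=2\rho g(V,Y)-\frac12 Y(|V|^2)$, and $\mathrm{div}\,V=3\rho$, which are needed to differentiate $Q$.

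Next I substitute this $Q$ into the three-dimensional curvature formula \eqref{5.4*} with $Z=V$. The rank-one terms proportional to $V$ cancel and everything collapses to
\[
R(X,Y)V=\tfrac b2\bigl(g(Y,V)X-g(X,V)Y\bigr),\qquad b:=(\lambda-\rho)+\tfrac1m|V|^2 .
\]
Two consequences drive the argument. Applying the contracted second Bianchi identity $\mathrm{div}\,Q=\frac12\,dr$ to the explicit $Q$, together with the conformal identities above, yields a first-order relation which works out to $\frac m2\,d\rho=d|V|^2-5\rho\,V^\flat$; taking $Y=V$ here and using $V(|V|^2)=2\rho|V|^2$ gives $V(\rho)=-\frac{6\rho}{m}|V|^2$. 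On the other hand, setting $Y=\xi$ in the displayed curvature identity and using the pair symmetry of $R$ gives $R(V,\xi)\xi=\frac b2\bigl(V-\eta(V)\xi\bigr)$; feeding this into the structural equation \eqref{2.10}, with $\phi^2V=-(V-\eta(V)\xi)$, produces an operator identity on the contact distribution $\mathcal{D}=\ker\eta$ relating $b$, $\alpha$, $h$ and $\nabla_\xi h$. Its trace reproduces $(\lambda-\rho)+\frac1m\eta(V)^2=-2\alpha^2-|h|^2$, while its trace-free symmetric part couples $V^{\perp}:=V-\eta(V)\xi$ to $\alpha\phi h+\phi\nabla_\xi h$.

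The concluding step is to combine these. The structural identity expresses $\lambda-\rho$, hence $d\rho$, through quantities intrinsic to the almost $\alpha$-cosymplectic structure along $\xi$; matching its tangential and normal components against the Bianchi relation $\frac m2\,d\rho=d|V|^2-5\rho V^\flat$ and against $V(\rho)=-\frac{6\rho}{m}|V|^2$ forces $\rho$ to be constant. Once $\rho$ is constant the Bianchi relation reads $d|V|^2=5\rho V^\flat$, and reinserting this into the normal component of the structural identity forces $\rho=0$; hence $\mathcal{L}_Vg=0$, i.e.\ $V$ is Killing. With $\rho=0$ the same relation gives $d|V|^2=0$, so $r=3\lambda+\frac1m|V|^2$ is constant, as claimed.

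The hard part is precisely this last combination. Most first-order contractions of the conformal curvature identity are tautological, being mere re-encodings of the second Bianchi identity, so genuinely new information emerges only from pairing the dimension-three identity for $R(\cdot,\cdot)V$ with the intrinsic equations \eqref{2.6} and \eqref{2.10} along the Reeb direction. The two technical hurdles I expect to dominate are controlling the self-dual operator $h$ and the derivative $\nabla_\xi h$ that appear there, and --- with no compactness hypothesis available --- upgrading the \emph{homothetic} conclusion $\rho=\mathrm{const}$ to the \emph{Killing} conclusion $\rho=0$ by a purely pointwise argument rather than an integration over $M$.
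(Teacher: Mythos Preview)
Your setup is sound: the expression $QX=(\lambda-\rho)X+\tfrac1m g(V,X)V$, the formula $R(X,Y)V=\tfrac b2(g(Y,V)X-g(X,V)Y)$, and the Bianchi relation $\tfrac m2\,d\rho=d|V|^2-5\rho V^\flat$ are all correct. The problem is that from this point on you are proposing a programme rather than giving a proof. The ``concluding step'' --- matching components of the structural identity coming from \eqref{2.10} against the Bianchi relation to force first $\rho=\text{const}$ and then $\rho=0$ --- is never carried out; you yourself flag control of $h$ and $\nabla_\xi h$ and the passage from homothetic to Killing as unresolved hurdles. As written, nothing forces $\rho=0$: the relation \eqref{2.10} introduces the unknown operator $\phi(\nabla_\xi h)$, and you have no independent equation for it, so the system does not close.

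The paper's argument avoids all of this by a direct algebraic observation. Insert the explicit $Q$ into the three-dimensional curvature formula with $Y=Z=\xi$, then apply \eqref{2.6}, i.e.\ $R(X,\xi)\xi-\phi R(\phi X,\xi)\xi=2(\alpha^2\phi^2X-h^2X)$; this kills the unknown term $\phi(\nabla_\xi h)$ that obstructed your route. Evaluating at $X=V$ and using the three-dimensional identity $h^2-\alpha^2\phi^2=\tfrac12\operatorname{trace}(l)\,\phi^2$ forces $V\in\mathcal{D}=\ker\eta$. Once $\eta(V)=0$, the Killing conclusion is immediate and pointwise: from conformality $g(\nabla_\xi V,\xi)=\rho$, while $g(\nabla_\xi V,\xi)=\xi(\eta(V))-g(V,\nabla_\xi\xi)=0$ since $\nabla_\xi\xi=0$ by \eqref{2.5}. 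Hence $\rho=0$. Constancy of $r$ then follows from the known conformal identity \eqref{6.38*}, $\tfrac{2}{2n+3}X(r)+X(\rho)-\tfrac{2\rho}{m}V^\flat(X)=0$, rather than from any argument about $|V|^2$. The missing idea in your attempt is precisely this: use \eqref{2.6} (not \eqref{2.10}) to obtain an \emph{algebraic} constraint on the position of $V$ relative to $\xi$, after which $\rho=0$ drops out in one line from $\nabla_\xi\xi=0$.
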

\begin{proof}
As $V$ is a conformal vector field, we have $(\mathcal{L}_Vg)(X,Y)=2\rho g(X,Y)$ for any vector fields $X,Y$ and some function $\rho$ on $M$, hence \eqref{4.27} becomes
\begin{equation}\label{6.35}
  QX=(\lambda-\rho)X+\frac{1}{m} V^\flat(X) V.
\end{equation}
Substituting \eqref{6.35} into \eqref{5.4*} gives
\begin{align*}
  R(X,Y)Z=&\frac{1}{m}\Big[g(Y,Z)V^\flat(X)-g(X,Z) V^\flat(Y) \Big]V\\
&+\frac{1}{m}V^\flat(Y) g( V,Z)X-\frac{1}{m}V^\flat(X) g( V,Z)Y\\
&+\Big(2(\lambda-\rho)-\frac{r}{2}\Big)\{g(Y,Z)X-g(X,Z)Y\}.
  \end{align*}
Putting $Y=Z=\xi$ we have
\begin{align*}
  R(X,\xi)\xi=&\frac{1}{m}\Big[V^\flat(X)-\eta(X)\eta(V)\Big]V\\
&+\frac{1}{m}\eta(V)\eta(V)X-\frac{1}{m}V^\flat(X) \eta(V)\xi\nonumber\\
&+\Big(2(\lambda-\rho)-\frac{r}{2}\Big)\{X-\eta(X)\xi\}.\nonumber
  \end{align*}
Thus using \eqref{2.6} we obtain
\begin{align*}
&\frac{1}{m}\Big[V^\flat(X)-\eta(X)\eta(V)\Big]V+\frac{1}{m}\eta(V)\eta(V)X-\frac{1}{m}V^\flat(X) \eta(V)\xi\\
=&\frac{1}{m}V^\flat(\phi X)\phi V+\frac{1}{m}\eta(V)\eta(V)\phi X+2\Big(2(\lambda-\rho)-\frac{r}{2}+\alpha^2\Big)\phi^2 X-2h^2X.\nonumber
  \end{align*}
Now letting $X=V$ we conclude
\begin{equation*}
0=\frac{1}{m}\eta(V)\eta(V)\phi V+2\Big(\frac{\lambda-\rho}{2}+\alpha^2\Big)\phi^2 V-2h^2V.
\end{equation*}

Since $h^2-\alpha^2\phi^2=\frac{\mathrm{trace}(l)}{2}\phi^2$ (see \cite[Proposition 14]{OAM}), we know $V\in\mathcal{D}$, where $\mathcal{D}=\{X\in TM:\eta(X)=0\}.$ The conformal condition of $V$ implies $g(\nabla_\xi V,\xi)=\rho g(\xi,\xi)=\rho$, i.e. $\rho=-g(V,\nabla_\xi\xi)=0$.

Remark that the following formula holds (cf.\cite{GH2}):
\begin{equation}\label{6.38*}
\frac{2}{2n+3}X(r)+X(\rho)-\frac{2\rho}{m}V^\flat(X) = 0
\end{equation}
for $\mathrm{dim} M=2n+1.$
Therefore we see that the scalar curvature $r$ is constant.
\end{proof}
Finally, we intend to consider a three dimensional strictly $\alpha$-almost cosymplectic manifold (i.e. $h\neq0$), admitting a quasi-Einstein structure $(g,m,f,\lambda)$.
There exits a local orthonormal frame field $\mathcal{E}=\{e_1,e_2=\phi e_1,\xi\}$
such that $he_1=\mu e_1$ and $he_2=-\mu e_2$, where $\mu$ is a
positive non-vanishing smooth function of $M$. The following relation holds (\cite[Proposition 12]{OAM}):
\begin{align}
  &\nabla_\xi h=2ah\phi+\xi(\mu)s.\label{4.32}
\end{align}
Here $a$ is a function defined by $a=g(\nabla_\xi e_2,e_1)$ and $s$
is a $(1,1)$ tensor field defined by $se_1=e_1,se_2=-e_2$ and $s\xi=0$.
\begin{lemma}[\cite{OAM}]\label{L4.1}
With respect to $\mathcal{E}$ the Levi-Civita
connection $\nabla$ is given by
\begin{align*}
  &\nabla_\xi e=-a\phi e, \quad \nabla_\xi \phi e=ae,\;\;\nabla_\xi\xi=0, \\
  &\nabla_{e}\xi=\alpha e-\mu \phi e, \quad \nabla_{\phi e}\xi=-\mu e+\alpha\phi e,\\
&\nabla_{e}e=\frac{1}{2\mu}[(\phi e)(\mu)+\sigma(e)]\phi e-\alpha\xi, \quad \nabla_{\phi e}\phi e=\frac{1}{2\mu}[e(\mu)+\sigma(\phi e)]e-\alpha\xi,\\
&\nabla_{e}\phi e=-\frac{1}{2\mu}[(\phi e)(\mu)+\sigma(e)] e+\mu\xi,\quad \nabla_{\phi e}e=-\frac{1}{2\mu}[e(\mu)+\sigma(\phi e)]\phi e+\mu\xi,
\end{align*}
where $\sigma$ is the 1-form defined by $\sigma(\cdot)=Ric(\cdot ,\xi)$.
\end{lemma}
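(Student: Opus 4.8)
The plan is to determine every connection coefficient $g(\nabla_{e_i}e_j,e_k)$ of the Levi-Civita connection with respect to the orthonormal frame $\mathcal{E}=\{e,\phi e,\xi\}$ (writing $e=e_1$, $\phi e=e_2$), and to observe that almost all of them are forced by three soft inputs: the formula \eqref{2.5*} for $\nabla_X\xi$, metric compatibility together with orthonormality (so each connection matrix $X\mapsto(g(\nabla_X e_i,e_j))$ is skew-symmetric), and torsion-freeness. Only two coefficients, namely the rotation rates $b_1:=g(\nabla_e e,\phi e)$ and $b_2:=g(\nabla_{\phi e}\phi e,e)$ of the frame inside the distribution $\mathcal D=\ker\eta$, survive these constraints, and the heart of the argument is to pin them down through the Ricci curvature in the $\xi$-direction.

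First I would evaluate \eqref{2.5*}, $\nabla_X\xi=-\alpha\phi^2 X-\phi hX$, on the three frame vectors. Using $\phi^2X=-X+\eta(X)\xi$ together with $he=\mu e$, $h\phi e=-\mu\phi e$ and $h\xi=0$ yields at once $\nabla_e\xi=\alpha e-\mu\phi e$, $\nabla_{\phi e}\xi=-\mu e+\alpha\phi e$, $\nabla_\xi\xi=0$. Next, since $\mathcal E$ is orthonormal, $g(\nabla_X e_i,e_j)=-g(e_i,\nabla_X e_j)$; in particular $g(\nabla_X Y,\xi)=-g(Y,\nabla_X\xi)$ is already known. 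This skew-symmetry fixes $\nabla_\xi e$ and $\nabla_\xi\phi e$ once $a=g(\nabla_\xi\phi e,e)$ is introduced, giving $\nabla_\xi e=-a\phi e$ and $\nabla_\xi\phi e=ae$, and it fixes the $\xi$-components of $\nabla_e e,\nabla_e\phi e,\nabla_{\phi e}e,\nabla_{\phi e}\phi e$ to be $-\alpha,\mu,\mu,-\alpha$ respectively. One is thus left precisely with
\[
\nabla_e e=b_1\phi e-\alpha\xi,\quad \nabla_e\phi e=-b_1 e+\mu\xi,\quad \nabla_{\phi e}\phi e=b_2 e-\alpha\xi,\quad \nabla_{\phi e}e=-b_2\phi e+\mu\xi,
\]
with $b_1,b_2$ the only unknowns, and torsion-freeness then gives $[\phi e,e]=b_1 e-b_2\phi e$.

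The decisive step is to compute $R(\phi e,e)\xi=\nabla_{\phi e}\nabla_e\xi-\nabla_e\nabla_{\phi e}\xi-\nabla_{[\phi e,e]}\xi$ by feeding in the formulas just obtained. A direct expansion produces
\[
R(\phi e,e)\xi=\bigl(e(\mu)-2\mu b_2\bigr)e+\bigl(2\mu b_1-(\phi e)(\mu)\bigr)\phi e.
\]
Because $R(e,e)=0$ and the curvature terms along $\xi$ vanish, the Ricci component $\sigma(e)=Ric(e,\xi)$ collapses to the single term $g(R(\phi e,e)\xi,\phi e)=2\mu b_1-(\phi e)(\mu)$, and symmetrically $\sigma(\phi e)=g(R(e,\phi e)\xi,e)=2\mu b_2-e(\mu)$. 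Solving these two scalar relations for $b_1,b_2$ (legitimate since $\mu>0$) gives $b_1=\frac{1}{2\mu}[(\phi e)(\mu)+\sigma(e)]$ and $b_2=\frac{1}{2\mu}[e(\mu)+\sigma(\phi e)]$, which is exactly the content of the lemma.

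I expect the main obstacle to be the bookkeeping in the curvature expansion of the preceding display: each of the three terms $\nabla_{\phi e}\nabla_e\xi$, $\nabla_e\nabla_{\phi e}\xi$ and $\nabla_{[\phi e,e]}\xi$ mixes the unknowns $b_1,b_2$ with derivatives of $\mu$ and with $\alpha$, and one must verify that all the $\alpha$- and $b_i$-cross terms (in particular everything along $\xi$) cancel identically, so that only the stated $\mathcal D$-components remain. A convenient consistency check is that the relation \eqref{4.32} for $\nabla_\xi h$ is recovered from $\nabla_\xi e=-a\phi e$, $\nabla_\xi\phi e=ae$ together with $he=\mu e$ and $h\phi e=-\mu\phi e$, which guards against sign errors in the $\xi$-derivatives.
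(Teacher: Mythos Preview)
Your argument is correct: the reduction via \eqref{2.5} and orthonormality to the two unknowns $b_1,b_2$, followed by the curvature computation $R(\phi e,e)\xi=(e(\mu)-2\mu b_2)e+(2\mu b_1-(\phi e)(\mu))\phi e$ and the identification of $\sigma(e),\sigma(\phi e)$ as the $\phi e$- and $e$-components, is exactly the standard route and all your signs check out. The paper itself does not prove this lemma but simply quotes it from \cite{OAM}, so there is no in-paper proof to compare against; your derivation is the natural one and would serve as a self-contained substitute.
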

We say that (1,1)-type tensor field $\phi h$ on $(M,g)$ is said to be an\emph{ $\eta$-parallel}
tensor if it satisfies the equation
\begin{equation*}
g((\nabla_X\phi h)Y, Z) = 0
\end{equation*}
for all tangent vectors $X, Y, Z$ orthogonal to $\xi$ (see \cite{AYM}).
\begin{theorem}
There are no quasi-Einstein structures  $(g,m,f,\lambda)$ on a strictly  almost $\alpha$-cosymplectic manifold $(M^3,\phi,\xi,\eta)$ with $\phi h$ is $\eta$-parallel.
\end{theorem}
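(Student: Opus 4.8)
The plan is to work entirely in the adapted orthonormal frame $\mathcal{E}=\{e_1,\,e_2=\phi e_1,\,\xi\}$ introduced above, with $he_1=\mu e_1$, $he_2=-\mu e_2$ and $\mu\neq0$ (strictness), and to write $Df=f_1e_1+f_2e_2+f_0\xi$ where $f_1=e_1(f)$, $f_2=e_2(f)$, $f_0=\xi(f)$. Since $\phi he_1=\mu e_2$, $\phi he_2=\mu e_1$ and $\phi h\xi=0$, I would first substitute these together with the connection of Lemma~\ref{L4.1} into the hypothesis $g((\nabla_X\phi h)Y,Z)=0$ for $X,Y,Z\in\{e_1,e_2\}$. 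Expanding $(\nabla_{e_i}\phi h)e_j$ and projecting onto $\mathcal{D}=\{X:\eta(X)=0\}$, the conditions collapse to
\begin{equation*}
e_1(\mu)=e_2(\mu)=0,\qquad \sigma(e_1)=\sigma(e_2)=0,
\end{equation*}
where $\sigma=\mathrm{Ric}(\cdot,\xi)$; thus $\mu$ is constant along $\mathcal{D}$ and $\xi$ is an eigenvector of the Ricci operator $Q$. These identities kill the $\mu$-derivative terms in Lemma~\ref{L4.1}, and the Jacobi identity for the simplified brackets additionally forces $e_1(a)=e_2(a)=0$.

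Next I would compute $Q$ in $\mathcal{E}$. Using \eqref{2.5}, \eqref{2.10}, \eqref{4.32} and the simplified connection, a direct curvature computation yields
\begin{equation*}
Q\xi=-2(\alpha^2+\mu^2)\xi,\quad Qe_1=(-2\alpha^2+2a\mu)e_1+b\,e_2,\quad Qe_2=b\,e_1+(-2\alpha^2-2a\mu)e_2,
\end{equation*}
with $b:=2\alpha\mu+\xi(\mu)$, so that $r=-6\alpha^2-2\mu^2$ and $Dr=\xi(r)\xi$. The first decisive step is to apply Lemma~\ref{L3.2} with $X=e_1$, $Y=e_2$. The left-hand side $R(e_1,e_2)Df$ lies in $\mathcal{D}$ (one checks $R(e_1,e_2)\xi=0$), while computing $(\nabla_{e_2}Q)e_1-(\nabla_{e_1}Q)e_2$ shows the $\xi$-component of the right-hand side equals $4a\mu^2$. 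Hence $a\mu^2=0$, and since $\mu\neq0$ this gives $a=0$; in particular $\nabla_\xi e_1=\nabla_\xi e_2=0$, and $Q|_{\mathcal{D}}$ has equal diagonal entries $-2\alpha^2$ and off-diagonal entry $b$.

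It then remains to contradict $\mu\neq0$. With $a=0$ the difference $(\nabla_{e_2}Q)e_1-(\nabla_{e_1}Q)e_2$ is purely tangent to $\xi$, so the $\mathcal{D}$-part of Lemma~\ref{L3.2} (for $X=e_1,Y=e_2$) comes only from its lower-order terms and produces the homogeneous system
\begin{equation*}
c\,f_2=\tfrac{b}{m}f_1,\qquad c\,f_1=\tfrac{b}{m}f_2,\qquad c:=\mu^2-\alpha^2-\tfrac{\lambda+2\alpha^2}{m},
\end{equation*}
whence $\big((mc)^2-b^2\big)f_1=\big((mc)^2-b^2\big)f_2=0$. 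If $f_1=f_2=0$ then $Df=f_0\xi$ is collinear with $\xi$, and the argument of Theorem~\ref{T4.2} (equation \eqref{4.27*}) applies verbatim to give $\phi h^2=0$, i.e. $h=0$, contradicting strictness. Otherwise $(mc)^2=b^2$ holds on a dense set, and here I would bring in the remaining scalar relations — the $e_1$-component of Lemma~\ref{L3.2} for $X=\xi,Y=e_1$, which gives $f_0(2\mu^2-c)=\mu\,\xi(\mu)$, together with the Hessian form of \eqref{1}, namely $\xi(f_i)=\tfrac1m f_0 f_i$ and $\xi(f_0)=\lambda+2\alpha^2+2\mu^2+\tfrac1m f_0^2$ — to force $\mu\equiv0$, completing the proof.

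The hard part will be this last case $(mc)^2=b^2$, where $Df$ need not be collinear with $\xi$ so Theorem~\ref{T4.2} does not apply directly; the relation then becomes a first-order ODE $m\mu^2-(m+2)\alpha^2-\lambda=\pm(2\alpha\mu+\xi(\mu))$ for $\mu$ along the $\xi$-trajectories, and it must be shown inconsistent (unless $\mu\equiv0$) after coupling it with the evolution equations for $f_0,f_1,f_2$. This is exactly where the third covariant derivatives of $Q$ enter, so the bookkeeping there is the main obstacle; a cleaner alternative worth trying first is to prove directly that $(mc)^2-b^2$ cannot vanish on the open set where $h\neq0$, thereby eliminating the case split entirely.
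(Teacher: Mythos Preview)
Your argument stalls precisely where you say it does: the branch $(mc)^2=b^2$ with $f_1,f_2$ not both zero is left open, and the coupling with the Hessian equations along $\xi$ that you sketch is not enough to close it without further structural input. As written this is a genuine gap, not just missing bookkeeping---nothing you have recorded forces $\mu$ to be constant, and without that the ODE you write down has plenty of nontrivial solutions.

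The paper's route bypasses this case split entirely. The identity you are implicitly using in the $\xi$-component of Lemma~\ref{L3.2} is in fact the operator commutation
\[
Q\,\phi h=\phi h\,Q,
\]
which follows from $R(\phi X,\phi Y)\xi=0$ (a consequence of the $\eta$-parallelism of $\phi h$). Rather than only reading off $a=0$ from it, differentiate it covariantly:
\[
(\nabla_XQ)\phi hY+Q(\nabla_X\phi h)Y=(\nabla_X\phi h)QY+\phi h(\nabla_XQ)Y.
\]
Taking $X=e_1$, $Y=\xi$ and using $\nabla_{e_1}\xi=\alpha e_1-\mu e_2$ together with $e_1(\mu)=0$ yields $-\alpha\,\phi h e_1+h^2 e_1=0$, i.e. $\mu=\alpha$ is a nonzero constant; taking $X=Y=e_1$ then forces $r=0$. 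At this point the proof becomes global: since $r$ is constant, the scalar identity \eqref{3.10*} collapses to
\[
-(m-1)\,|\mathrm{Ric}|^2=6\lambda^2,
\]
so $\lambda=0$, and for $m\neq1$ also $\mathrm{Ric}=0$, which from \eqref{6.43*} gives $\alpha=0$, contradicting $\mu=\alpha\neq0$. Only the boundary case $m=1$ remains, and there one applies Lemma~\ref{L3.2} with $X\in\mathcal{D}$, $Y=\xi$ (using $Q\xi=-4\alpha^2\xi$ and $\nabla_\xi Q=0$) to obtain $3e_1(f)=e_2(f)$ and $3e_2(f)=e_1(f)$, hence $Df=\xi(f)\xi$; your own observation via \eqref{4.27*} then gives $h=0$.

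So the missing idea is not a harder analysis of your ODE but the covariant derivative of $Q\phi h=\phi h Q$, which pins down $\mu=\alpha$ and $r=0$ and lets the integrated identity \eqref{3.10*} finish the job.
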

\begin{proof}
By \cite[Theorem 1]{AYM}, the Reeb vector field is an eigenvector field of the Ricci operator, then $\sigma(e)=\sigma(\phi e)=0$
and $Q\xi=-2(\mu^2+\alpha^2)\xi$.
Moreover, by \cite[Proposition 13]{AYM}, we know that $R(\phi X,\phi Y)\xi=0$ for any vector fields $X,Y$. In view of Lemma \ref{L3.2},
\begin{equation*}
g((\nabla_{\phi Y}Q)\phi X-(\nabla_{\phi X}Q)\phi Y,\xi)=0.
\end{equation*}
Using \eqref{2.5} we obtain
\begin{equation}\label{6.42}
Q\phi hY=\phi hQ Y
\end{equation}
for any vector filed $Y$ on $M$.

From \cite[Lemma 3]{OAM}, the Ricci operator may be expressed as
\begin{align*}
  QX=&\Big(\frac{1}{2}r+\alpha^2+\mu^2\Big)X+\Big(-\frac{1}{2}r-3\alpha^2-3\mu^2\Big)\eta(X)\xi\\
&+2\alpha \phi hX+ \phi(2ah\phi+\xi(\mu)s)X.
\end{align*}
Thus
\begin{align}
  Qe= & \Big(\frac{1}{2}r+\alpha^2+\mu^2+2\mu a\Big)e+(2\alpha\mu+ \xi(\mu))\phi e, \label{6.43*}\\
  Q\phi e =&  \Big(\frac{1}{2}r+\alpha^2+\mu^2-2\mu a\Big)\phi e+(2\alpha\mu+ \xi(\mu)) e.\label{6.44*}
\end{align}
Putting $Y=e$ in \eqref{6.42} and using the pervious equations, we obtain
\begin{align*}
&\mu \Big[\Big(\frac{1}{2}r+\alpha^2+\mu^2+2\mu a\Big)e+(2\alpha\mu+ \xi(\mu))\phi e\Big]\\
=&\mu\Big[\Big(\frac{1}{2}r+\alpha^2+\mu^2-2\mu a\Big)e+(2\alpha\mu+ \xi(\mu))\phi e\Big].
\end{align*}
This implies $a=0$ since $\mu\neq0$.
Since $\phi h$ is $\eta$-parallel,  we obtain from Lemma \ref{L4.1} that $e(\mu)=\phi e(\mu)=0$.

Differentiating \eqref{6.42} along any vector field $X$ gives
\begin{equation}\label{6.43}
(\nabla_XQ)\phi hY+Q(\nabla_X\phi h)Y=(\nabla_X\phi h)Q Y+\phi h(\nabla_XQ)Y.
\end{equation}
Letting $X=e$ and $Y=\xi$ and using \eqref{6.42} again yields
\begin{equation*}
0=-\alpha\phi he+h^2e.
\end{equation*}
Here we have used \eqref{2.5} and $e(\mu)=0$. This implies that $\mu=\alpha$ is constant.

Now taking $X=Y=e$ in \eqref{6.43} and using \eqref{6.43*}, \eqref{6.44*}, we find $r=0$. Thus by \eqref{3.10*} we obtain
\begin{equation*}
  -(m-1)|\mathrm{Ric}|^2=6\lambda^2,
\end{equation*}
which shows that $\lambda=0$ since $m\geq1$. Moreover, $\mathrm{Ric}=0$ if $m\neq1$. From \eqref{6.43*}, we see $\alpha=0$, that is impossible as $\alpha=\mu\neq0.$
Thus $m=1$.

Write $$Df=\xi(f)\xi+e(f)e+\phi e(f)\phi e.$$
By Lemma \ref{L3.2} and \eqref{2.10}, it follows
\begin{align*}
g(R(X,\xi)D f,\xi) =&g((\nabla_\xi Q)X-(\nabla_XQ)\xi,\xi)-4\alpha^2\{X(f)-\xi(f)\eta(X)\}\\
=&-4\alpha^2\{X(f)-\xi(f)\eta(X)\}\\
=&g(-2\alpha^2\phi^2X-2\alpha\phi hX,Df).
\end{align*}
Putting $X=e$ in this formula we derive
\begin{align*}
3e(f)=\phi e(f),\quad 3\phi e(f)=e(f).
\end{align*}
The above two formulas imply $e(f)=\phi e(f)=0$, thus $Df=\xi(f)\xi$. By the proof of Theorem \ref{T4.2}, we know $h=0$, which is a contradiction.
\end{proof}

\section{$K$-cosymplectic manifolds}
 Let $M$ be a $(2n+1)$-dimensional almost cosymplectic manifold defined in Section 2, namely the 1-form $\eta$ and the fundamental form $\omega$ are closed  and satisfy $\eta\wedge\omega^n\neq0$ at every point of $M$.

\begin{definition}[\cite{BG}]
An almost cosymplectic manifold $(M,\phi,\xi,\eta,g)$ is called a \emph{$K$-cosymplectic manifold} if the Reeb vector field $\xi$ is Killing.
\end{definition}

For a $K$-cosymplectic manifold $(M,\phi,\xi,\eta,g)$, by Theorem 3.11 in \cite{MNY} we know
\begin{equation*}
  \nabla\xi=\nabla\eta=0.
\end{equation*}
Moreover, it follows from  Theorem 3.29 in \cite{MNY} that
\begin{equation}\label{6.1}
 R(X,Y)\xi=0\quad\text{for all}\;X,Y\in\mathfrak{X}(M).
\end{equation}
That shows that $Q\xi=0$.

As $V^\flat$ is closed, Equation \eqref{4.27} is equivalent to
\begin{equation}\label{5.21}
  \nabla_YV=\lambda Y-QY+\frac{1}{m}g(V,Y)V.
\end{equation}
Via this formula one derives easily
\begin{align}\label{5.22}
  R(X,Y)V =& \nabla_X\nabla_YV-\nabla_Y\nabla_XV-\nabla_{[X,Y]}V \\
  = & (\nabla_YQ)X-(\nabla_XQ)Y+\frac{1}{m}[V^\flat(X)QY\nonumber\\
  &-V^\flat(Y)QX]+\frac{\lambda}{m}[V^\flat(Y)X-V^\flat(X)Y].\nonumber
\end{align}
By \eqref{6.1} and $Q\xi=0$, taking an inner product of \eqref{5.22} with $\xi$ gives
\begin{align*}
  \frac{\lambda}{m}[V^\flat(Y)\eta(X)-V^\flat(X)\eta(Y)]=0.
\end{align*}
This implies that either $\lambda=0$ or $V=\eta(V)\xi$.

As before we set $\eta(V)=F$.
Since $V=F\xi$ and $\nabla\xi=0$,  \eqref{5.21} becomes
\begin{equation}\label{5.25}
  Y(F)\xi=\lambda Y-QY+\frac{F^2}{m}\eta(Y)\xi.
\end{equation}
Due to $Q\xi=0$, taking  $Y=\xi$ implies
\begin{equation}\label{5.24*}
  \xi(F)=\lambda+\frac{F^2}{m}.
\end{equation}

On the other hand, contracting \eqref{5.25} over $Y$ we also have
\begin{equation*}
 \xi(F)=(2n+1)\lambda-r+\frac{F^2}{m},
\end{equation*}
which, combining with \eqref{5.24*}, yields $r=2n\lambda$ is constant. Further, from \eqref{5.25} we know
\begin{equation*}
  QY=\lambda (Y-\eta(Y)\xi).
\end{equation*}
That is to say that $M$ is an $\eta$-Einstein manifold.

Summing up the above discussion, we actually proved the following conclusion.
\begin{theorem}
Let $(M,\phi,\xi,\eta,g)$ be a $(2n+1)$-dimensional K-cosymplectic manifold. Suppose that $M$ admits a closed, non-steady quasi-Einstein structure $(g, V, m, \lambda)$.
Then $M$ is $\eta$-Einstein .
\end{theorem}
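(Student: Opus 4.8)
The plan is to exploit the three rigidity identities available on a $K$-cosymplectic manifold, namely $\nabla\xi=0$, $R(X,Y)\xi=0$, and $Q\xi=0$, together with the closed form of the quasi-Einstein equation. Since $V^\flat$ is closed, I would first rewrite \eqref{4.27} as the first-order relation \eqref{5.21}, $\nabla_Y V=\lambda Y-QY+\frac1m g(V,Y)V$, and then differentiate once more to produce the curvature expression \eqref{5.22} for $R(X,Y)V$.

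The first decisive step is to pair \eqref{5.22} with $\xi$. Because $R(X,Y)\xi=0$ the curvature term drops by the symmetries of $R$; because $Q\xi=0$ one has $g(QX,\xi)=0$; and because $\nabla\xi=0$ together with $Q\xi=0$ gives $(\nabla_Y Q)\xi=0$, hence $g((\nabla_Y Q)X,\xi)=g((\nabla_Y Q)\xi,X)=0$ by self-adjointness of $\nabla_Y Q$. Every term but one therefore annihilates, leaving $\frac{\lambda}{m}\big(V^\flat(Y)\eta(X)-V^\flat(X)\eta(Y)\big)=0$. Invoking the non-steady hypothesis $\lambda\neq0$ and setting $X=\xi$ then yields $V=\eta(V)\xi$; I write $F=\eta(V)$.

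With $V=F\xi$ and $\nabla\xi=0$ one has $\nabla_Y V=Y(F)\xi$, so \eqref{5.21} collapses to \eqref{5.25}, $Y(F)\xi=\lambda Y-QY+\frac{F^2}{m}\eta(Y)\xi$. Setting $Y=\xi$ and using $Q\xi=0$ gives \eqref{5.24*}, $\xi(F)=\lambda+\frac{F^2}{m}$; taking instead the trace over $Y$ gives $\xi(F)=(2n+1)\lambda-r+\frac{F^2}{m}$, and comparing the two forces $r=2n\lambda$, a constant.

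The only genuinely non-routine point is passing from \eqref{5.25} to the $\eta$-Einstein form $QY=\lambda(Y-\eta(Y)\xi)$. Rearranging \eqref{5.25} gives $QY=\lambda Y+\big(\tfrac{F^2}{m}\eta(Y)-Y(F)\big)\xi$, which is not manifestly of $\eta$-Einstein type until one knows that $DF$ is collinear with $\xi$. I would extract this from the symmetry of $Q$: imposing $g(QY,Z)=g(Y,QZ)$ in the rearranged identity yields $Y(F)\eta(Z)=Z(F)\eta(Y)$, and $Z=\xi$ gives $Y(F)=\xi(F)\eta(Y)$, i.e. $DF=\xi(F)\xi$. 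Substituting this back and using \eqref{5.24*} to evaluate $\tfrac{F^2}{m}-\xi(F)=-\lambda$ produces exactly $QY=\lambda(Y-\eta(Y)\xi)$ with $\lambda$ constant, which is the asserted $\eta$-Einstein condition. The main obstacle is thus not any single hard computation but recognizing that the self-adjointness of the Ricci operator is what pins $DF$ to the $\xi$-direction; without it the conclusion would fail to be $\eta$-Einstein.
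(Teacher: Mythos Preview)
Your proof is correct and follows the paper's argument essentially verbatim: rewrite \eqref{4.27} as \eqref{5.21}, differentiate to get \eqref{5.22}, pair with $\xi$ using $R(X,Y)\xi=0$ and $Q\xi=0$ to force $V=F\xi$ when $\lambda\neq0$, then reduce to \eqref{5.25} and compare the $Y=\xi$ evaluation with the trace to obtain $r=2n\lambda$. The only place you add something is the passage from \eqref{5.25} to $QY=\lambda(Y-\eta(Y)\xi)$: the paper simply asserts this ``from \eqref{5.25}'', whereas you justify $DF=\xi(F)\xi$ via the self-adjointness of $Q$ (equivalently, taking $g(\cdot,\xi)$ in \eqref{5.25} for $Y\perp\xi$), which is exactly the missing sentence needed to make that step rigorous.
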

\begin{corollary}
Let $(M,\phi,\xi,\eta,g)$ be a $(2n+1)$-dimensional K-cosymplectic manifold. Suppose that $M$ admits a non-steady quasi-Einstein structure $(g, f, m, \lambda)$.
Then $M$ is $\eta$-Einstein .
\end{corollary}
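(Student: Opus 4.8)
The plan is to linearize the closed quasi-Einstein equation against the rigidity of the $K$-cosymplectic structure, where $\nabla\xi=0$, where $R(X,Y)\xi=0$ by \eqref{6.1}, and consequently $Q\xi=0$. First I would rewrite \eqref{4.27}: since $V^\flat$ is closed, the tensor $\nabla V^\flat$ is symmetric, so $\tfrac12(\mathcal{L}_V g)(X,Y)=g(\nabla_X V,Y)$, and \eqref{4.27} becomes the first-order relation
\begin{equation*}
\nabla_Y V=\lambda Y-QY+\frac1m g(V,Y)V.
\end{equation*}
This is the sole computational input; everything after it is differentiation and contraction.

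Next I would feed this into the Ricci identity $R(X,Y)V=\nabla_X\nabla_Y V-\nabla_Y\nabla_X V-\nabla_{[X,Y]}V$. Differentiating the displayed relation and simplifying yields
\begin{equation*}
R(X,Y)V=(\nabla_YQ)X-(\nabla_XQ)Y+\frac1m\bigl[V^\flat(X)QY-V^\flat(Y)QX\bigr]+\frac{\lambda}{m}\bigl[V^\flat(Y)X-V^\flat(X)Y\bigr].
\end{equation*}
The decisive move is to pair this with $\xi$. Because $R(X,Y)\xi=0$ forces $g(R(X,Y)V,\xi)=0$, because $Q\xi=0$ annihilates the two middle terms, and because $\nabla\xi=0$ gives $g((\nabla_YQ)X,\xi)=0$, every term collapses except the last, leaving $\tfrac{\lambda}{m}\bigl[V^\flat(Y)\eta(X)-V^\flat(X)\eta(Y)\bigr]=0$. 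Since the structure is non-steady, $\lambda\neq0$, so $V^\flat(Y)\eta(X)=V^\flat(X)\eta(Y)$; putting $X=\xi$ gives $V=\eta(V)\xi$, i.e. $V$ is collinear with the Reeb field.

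Finally I would exploit collinearity. Writing $F=\eta(V)$ and using $\nabla\xi=0$ gives $\nabla_Y V=Y(F)\xi$, so the first-order relation reads $Y(F)\xi=\lambda Y-QY+\tfrac{F^2}{m}\eta(Y)\xi$. Taking $Y=\xi$ (with $Q\xi=0$) gives $\xi(F)=\lambda+\tfrac{F^2}{m}$, while tracing over $Y$ gives $\xi(F)=(2n+1)\lambda-r+\tfrac{F^2}{m}$; comparing the two forces $r=2n\lambda$, a constant. The one genuinely delicate point, which I expect to be the main obstacle, is to collapse the remaining relation into the $\eta$-Einstein form: solving for $Q$ leaves $QY=\lambda Y+\bigl(\tfrac{F^2}{m}\eta(Y)-Y(F)\bigr)\xi$, and one must argue that $Y(F)=\xi(F)\eta(Y)$, i.e. $DF=\xi(F)\xi$. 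This follows either from the symmetry of the Ricci operator $Q$ or, more directly, from closedness, since $V^\flat=F\eta$ with $d\eta=0$ gives $dF\wedge\eta=0$. Substituting $\xi(F)=\lambda+\tfrac{F^2}{m}$ then reduces the bracket to $-\lambda\eta(Y)$ and produces $QY=\lambda(Y-\eta(Y)\xi)$, which is exactly the $\eta$-Einstein condition. Thus the work is not in the curvature bookkeeping but in noticing that the $\xi$-pairing kills all but one term, and then pinning $DF$ along $\xi$ so that the reduced Ricci operator is genuinely $\eta$-Einstein.
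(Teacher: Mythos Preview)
Your proposal is correct and follows essentially the same route as the paper: rewrite the closed quasi-Einstein equation as $\nabla_Y V=\lambda Y-QY+\tfrac1m g(V,Y)V$, compute $R(X,Y)V$, pair with $\xi$ to force $V=\eta(V)\xi$ when $\lambda\neq0$, and then reduce \eqref{5.25} to the $\eta$-Einstein form. You are in fact more careful than the paper on one point: the paper passes directly from \eqref{5.25} to $QY=\lambda(Y-\eta(Y)\xi)$ without isolating the step $DF=\xi(F)\xi$, whereas you make this explicit (either via the symmetry of $Q$ and $Q\xi=0$, or via $dF\wedge\eta=0$).
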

For $V$ being a conformal vector field, we also have
\begin{theorem}
Let $(M^{2n+1},\phi,\xi,\eta)$ be a $K$-cosymplectic manifold. Suppose that $M$ admits a quasi-Einstein structure $(g, V, m, \lambda)$ with $V$ being a conformal vector field. Then $V$ is Killing and $M$ is of constant scalar curvature. Moreover, if the quasi-Einstein structure is non-steady, $M$ is $\eta$-Einstein.
\end{theorem}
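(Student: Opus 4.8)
The plan is to run the same three-step strategy used for the almost $\alpha$-cosymplectic conformal theorem, but now the much stronger $K$-cosymplectic identities $\nabla\xi=0$, $R(X,Y)\xi=0$ and $Q\xi=0$ do most of the work. First I would feed the conformal condition $\mathcal{L}_Vg=2\rho g$ into the quasi-Einstein equation \eqref{4.27} to obtain the Ricci operator in the form
\begin{equation*}
QX=(\lambda-\rho)X+\frac{1}{m}V^\flat(X)V,
\end{equation*}
exactly as in \eqref{6.35}. Applying this to $\xi$ and using $Q\xi=0$ gives the $1$-form identity
\begin{equation*}
(\lambda-\rho)\eta(X)+\frac{1}{m}\eta(V)V^\flat(X)=0\qquad\text{for all }X.
\end{equation*}
Restricting to $X\in\mathcal{D}$ yields $\eta(V)V^\flat|_{\mathcal{D}}=0$, so at each point either $\eta(V)=0$ (i.e. $V\in\mathcal{D}$) or $V$ is collinear with $\xi$; taking $X=\xi$ yields the scalar relation $(\lambda-\rho)+\tfrac{1}{m}\eta(V)^2=0$.

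Next I would show that $V$ is Killing, i.e. $\rho\equiv0$. Since $\nabla\xi=0$, the conformal condition evaluated on $\xi$ gives $\rho=g(\nabla_\xi V,\xi)=\xi(\eta(V))$. On the open set where $V$ is collinear with $\xi$, writing $V=F\xi$ and using $\nabla\xi=0$ one finds $(\mathcal{L}_Vg)(X,X)=2X(F)\eta(X)=0$ for a unit $X\in\mathcal{D}$, which forces $\rho=0$ there; on the interior of the zero set of $\eta(V)$ the relation $\rho=\xi(\eta(V))$ forces $\rho=0$; and $\rho=0$ on the common boundary follows by continuity. Hence $\rho\equiv0$ and $V$ is Killing. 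With $\rho=0$ the Ghosh-type formula \eqref{6.38*} collapses to $X(r)=0$ for all $X$, so $M$ has constant scalar curvature, completing the first assertion.

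Finally, assume the structure is non-steady, $\lambda\neq0$. Setting $\rho=0$ in the scalar relation above gives $\eta(V)^2=-m\lambda$, a nonzero constant, so $\eta(V)$ never vanishes and the pointwise dichotomy forces $V=\eta(V)\xi$ everywhere. Substituting $V^\flat(X)=\eta(V)\eta(X)$ into $QX=\lambda X+\tfrac{1}{m}V^\flat(X)V$ and using $\eta(V)^2=-m\lambda$ gives
\begin{equation*}
QX=\lambda\big(X-\eta(X)\xi\big),
\end{equation*}
which is precisely the $\eta$-Einstein condition (and recovers $r=2n\lambda$).

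The step I expect to be the main obstacle is showing that $V$ is Killing: the identity coming from $Q\xi=0$ only produces a pointwise dichotomy, and one must argue $\rho=0$ separately on the region where $V\parallel\xi$ and on the region where $V\in\mathcal{D}$, then glue these across their common boundary by continuity of $\rho$. Everything after $\rho=0$—constant scalar curvature from \eqref{6.38*} and the $\eta$-Einstein conclusion—is a routine substitution.
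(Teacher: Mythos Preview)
Your proposal is correct and follows essentially the same route as the paper: derive $QX=(\lambda-\rho)X+\tfrac{1}{m}V^\flat(X)V$, use $Q\xi=0$ to get the dichotomy $V\parallel\xi$ versus $V\in\mathcal{D}$, show $\rho=0$ in each case (the paper also takes $X,Y\in\mathcal{D}$ in the conformal relation when $V=F\xi$), and then invoke \eqref{6.38*} for constant scalar curvature and read off the $\eta$-Einstein form when $\lambda\neq0$. Your version is in fact slightly more careful than the paper's, which treats the dichotomy as if it were global; your explicit open-set/continuity argument to pass from the pointwise alternative to $\rho\equiv0$ is a genuine improvement in rigor, and the remainder is identical to the paper's computation.
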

\begin{proof}
Since $Q\xi=0$, it follows from \eqref{6.35} that
\begin{equation}\label{5.46}
 0=Q\xi=(\lambda-\rho)\xi+\frac{1}{m} \eta(V)V.
\end{equation}
This shows that either $V\in\mathbb{R}\xi$ or $V\in\mathcal{D}$.

If $V\in\mathbb{R}\xi$, we write $\eta(V)=F$ and $V=F\xi$. Differentiating this along $X$ gives $\nabla_XV=X(F)\xi$ since $\nabla\xi=0$.
Because $V$ is a conformal vector field, we get $X(F)\eta(Y)+Y(F)\eta(X)=2\rho g(X,Y)$ for any $X,Y$. Now replacing $X$ and $Y$ by $\phi X$ and $\phi Y$, respectively, we
obtain easily $\rho=0$. For $V\in\mathcal{D}$, it is easy to get $\rho=0$ from the conformal condition of $V$. Recalling \eqref{6.38*}, we thus know that $r$ is constant.

Moreover, if $\lambda\neq0$, it implies from \eqref{5.46} that $\eta(V)\neq0$, that is, $V\in\mathbb{R}\xi$. We complete the proof by \eqref{6.35}.
\end{proof}

\end{document}